\newcommand{\SymbolDist}{\mathfrak{d}}
\newtheorem{theorem}{Theorem}[section]
\newtheorem{lemma}[theorem]{Lemma}
\theoremstyle{definition}
\newtheorem{definition}[theorem]{Definition}
\newcommand{\R}{\mathds{R}}
\newcommand{\N}{\mathds{N}}
\newcommand{\Sp}{\mathds{S}}
\newcommand{\B}{B^d}
\newcommand{\bd}{\boldsymbol}
\newcommand{\dint}{\mathrm{d}}
\newcommand{\origin}{\boldsymbol{o}}
\newcommand{\MetricBoundary}{d_m}
\newcommand{\MeanWidth}{b}
\newcommand{\FunctionBound}[1][l]{\mathfrak{b}_{#1}}
\newcommand{\ShapeFactor}[1][l]{\mathfrak{g}_{#1}}
\newcommand{\ShapeFactorBis}[1][l]{\mathfrak{f}_{#1}}
\renewcommand{\Cap}[3]{\mathrm{cap} \left( #1 , #2 , #3 \right) }
\newcommand{\vect}[1]{\boldsymbol{ #1 }}
\newcommand{\card}[1]{|#1|}
\newcommand{\Approx}{\boldsymbol{\Theta}}
\renewcommand{\mid}{:}
\begin{document}

\title{Polytopal Approximation of Elongated Convex Bodies}
\author{Gilles Bonnet \thanks{gilles.bonnet@rub.de} }
\maketitle

  Primary
  52A27,  
  52B11;  
  Secondary
  52B60.  

\abstract{This paper presents bounds for the best approximation, with respect to the Hausdorff metric, of a convex body $K$ by a circumscribed polytope $P$ with a given number of facets.
  These bounds are of particular interest if $K$ is elongated.
  To measure the elongation of the convex set, its isoperimetric ratio $ V_j(K)^{1/j} V_i(K)^{-1/i} $  is used.
  \smallskip
  \\
  \textbf{Keywords.} Polytopal approximation, elongated convex bodies, isoperimetric ratio, $\delta$-net.
  \smallskip
  \\
  \textbf{MSC.}  
    Primary
    52A27,  
    52B11;  
    Secondary
    52B60.  
    }

\section{Introduction and main results}
Let $ K \subset \R^d $ be a full dimensional convex body, i.e. a compact convex set with non-empty interior. 
Fix $n \in \N$ and denote by $ P = P_{K,n} \supset K$ a circumscribed polytope, with at most $n$ facets,  minimizing the Hausdorff distance $ d_H (K,P) $.
It is well known that $d_H(K,P)$ is of order $n^{- 2/(d-1)}$ (see, e.g., \cite{Gruber1993book}).  
Thus, we set $ \Cl[dist]{boundC} ( K , n ) := d_H (K,P) n^{2/(d-1)}$ and $ \Cl[dist]{boundCbis} ( K , n ) := \sup_{ m \geq n } \Cr{boundC} ( K , m ) $,
and we have
\[ d_H (K,P) 
= \frac{ \Cr{boundC} ( K , n ) }{n^{2/(d-1)}}
\leq  \frac{ \Cr{boundCbis} ( K , n ) }{n^{2/(d-1)}}.
\] 
Estimating $\Cr{boundC} ( K , n )$ and $\Cr{boundCbis} ( K , n )$ is a classical problem.
We refer the reader to the well known surveys of P. M. Gruber \cite{Gruber1993book,Gruber1994} and E. M. Bronstein \cite{Bronstein2008} for an excellent overview of the huge amount of results and literature about polytopal approximation.
The specificity of our main result is that we take into account how much $K$ is `\textit{elongated}'.

We denote by $V_i$ the $i$-th intrinsic volume (see Section \ref{sec:Setting} for the definition). 
For any $ 1 \leq i < j \leq d $, we call $ V_j(K)^{1/j} V_i(K)^{-1/i} $ the \textit{$(i,j)$-isoperimetric ratio} of $K$.
It is scale and translation invariant.
The isoperimetric inequality (see Section \ref{sec:Setting}, inequality \eqref{eq:IsoperimetricIneq}, for a statement) says that it is maximized by the balls.
On the other hand, $ V_j(K)^{1/j} V_i(K)^{-1/i} \simeq 0 $ precisely when the normalized body $ V_i(K)^{-1/i} K $ is close to a $(j-1)$-dimensional convex body.
If an isoperimetric ratio of $K$ is close to zero, we say that $K$ is \textit{elongated}.
More precisely, if $ V_j(K)^{1/j} V_i(K)^{-1/i} < \epsilon $, we say that 
\textit{$K$ is $ ( \epsilon \colon i , j ) $-elongated}.
The following theorem gives a bound for the Hausdorff distance between a convex body $K$ and its best approximating polytope.
This bound can be arbitrarily small if $K$ is sufficiently elongated.

\begin{theorem}
  \label{thm:main}
  Assume $ 1 \leq i < j \leq \lceil (d-1) /2 \rceil $.
  Set $\alpha=2 \lceil (d-1)/2 \rceil (d-1) d^{-1}$
  and $ \beta = \lceil (d-1)/2 \rceil (d-1)^{-1} d^{-1} $.
  There exist constants $\delta_{i,j}$ and $n_{i,j}$, both depending on $d$, such that the following holds.
  For any $ \epsilon >0 $ and any convex body $ K $
  \[ \text{ if } K \text{ is } (\epsilon\colon  i,j) \text{-elongated then } \Cr{boundCbis} \left( K , n_{i,j} \epsilon^{-\alpha} \right) 
  < \delta_{i,j} \epsilon^\beta V_1(K) . \]
  I.e. for any $ \epsilon>0 $ and any convex body $K$
  \[ \text{ if } \frac { V_j(K)^{1/j} } { V_i(K) ^{1/i} } < \epsilon 
  \text{ then } d_H ( K , P ) < \delta_{i,j} \epsilon^\beta \frac{V_1(K)}{n^{2/(d-1)}} \text{ for any } n \geq n_{i,j} \epsilon^{-\alpha} ,\]
  where $ P = P_{K,n} \supset K$ is a circumscribed polytope, with at most $n$ facets,  minimizing the Hausdorff distance $ d_H (K,P) $.
\end{theorem}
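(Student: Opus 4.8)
\noindent\emph{Proof strategy.}
The idea is to bound $\Cr{boundCbis}(K,n)$ by an intrinsic ``shape factor'' of $K$ that becomes independent of $n$ once $n$ is large, and then to bound that shape factor by a power of the isoperimetric ratio. First note that the hypothesis is invariant under translations and dilations, while both sides of the conclusion are translation invariant and positively homogeneous of degree $1$ in $K$; so we may normalize $K$ (fixing, say, $V_1(K)$) and --- after an approximation step, all the functionals below being continuous on the space of convex bodies and extremized by balls --- assume $\partial K$ is $C^2$ with positive Gauss--Kronecker curvature $\kappa$. The first ingredient is the classical economic cap-covering construction (see e.g.\ \cite{Gruber1993book}): covering $\partial K$ by $N$ caps of $d_m$-radius $\asymp\sqrt\delta$, with $d_m$ the curvature-adapted boundary metric, and taking the supporting hyperplanes of $K$ at their centres, one gets a circumscribed polytope $P$ with $N$ facets and $d_H(K,P)\le\delta$; the minimal such $N$ is at most $c_d\,S(K)\,\delta^{-(d-1)/2}+A(K)$, where $S(K)$ is the total $d_m$-content of $\partial K$ (with $S(K)\asymp_d\int_{\partial K}\sqrt\kappa\,\dint\mathcal{H}^{d-1}$ in the smooth case) and $A(K)$ is an additive overhead counting caps of maximal size. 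Solving for $\delta$ and using $\Cr{boundCbis}(K,n)=\sup_{m\ge n}\Cr{boundC}(K,m)$ gives
\[
\Cr{boundCbis}(K,n)\le c_d'\,S(K)^{2/(d-1)}\qquad\text{whenever }n\ge 2A(K).
\]

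The heart of the argument is to bound $S(K)^{2/(d-1)}$ by $\epsilon^\beta V_1(K)$. Writing $\kappa=s_{d-1}(\kappa_1,\dots,\kappa_{d-1})$ for the product of the principal curvatures, recall that $V_m(K)$ is proportional to $\int_{\partial K}s_{d-1-m}(\kappa_1,\dots,\kappa_{d-1})\,\dint\mathcal{H}^{d-1}$ ($m=d-1$ gives half the surface area, $m=0$ is Gauss--Bonnet). Since the $\kappa_l$ are non-negative, Maclaurin's and Newton's inequalities bound $\sqrt\kappa$ pointwise by a product of powers of $s_{d-1-i}(\kappa)$, $s_{d-1-j}(\kappa)$ and $1$; integrating over $\partial K$ and applying H\"older then yields $S(K)^{2/(d-1)}\le c_d''\,V_i(K)^{a}V_j(K)^{b}V_{d-1}(K)^{e}$ for suitable non-negative exponents. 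The hypothesis $j\le\lceil(d-1)/2\rceil$ enters precisely here: it is the exact range in which all the H\"older exponents occurring can be chosen $\le1$, so that replacing $\int s_r(\kappa)^p$ by $\big(\int s_r(\kappa)\big)^p$ is legitimate. Feeding in the elongation hypothesis $V_j(K)<\epsilon^{j}V_i(K)^{j/i}$ together with the isoperimetric inequalities \eqref{eq:IsoperimetricIneq} --- each ratio $V_b(\cdot)^{1/b}V_a(\cdot)^{-1/a}$ is maximized by balls --- one trades $V_i(K)$ and $V_{d-1}(K)$ for powers of $V_1(K)$; homogeneity forces the resulting power of $V_1(K)$ to be $1$, and collecting the power of $\epsilon$ gives $S(K)^{2/(d-1)}<c\,\epsilon^{\beta}V_1(K)$ with $\beta=\lceil(d-1)/2\rceil(d-1)^{-1}d^{-1}$.

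It remains to make the threshold $n\ge 2A(K)$ quantitative. An $(\epsilon\colon i,j)$-elongated $K$ is contained in the $\rho$-neighbourhood of a convex set of dimension at most $j-1$, where $\rho$ is bounded by a fixed positive power of $\epsilon$ times $V_1(K)$ (again a consequence of \eqref{eq:IsoperimetricIneq}). Bounding the number of maximal caps of $\partial K$ in terms of $\rho$ --- the decisive contribution coming from the thin part of $\partial K$, where the curvature is of order $\rho^{-1}$ --- shows $A(K)\le c_d'''\,\epsilon^{-\alpha}$ with $\alpha=2\lceil(d-1)/2\rceil(d-1)d^{-1}$. Taking $n_{i,j}\ge 2c_d'''$, the condition $n\ge n_{i,j}\epsilon^{-\alpha}$ forces $n\ge 2A(K)$, and the two displays above combine to $\Cr{boundCbis}(K,n)<\delta_{i,j}\epsilon^{\beta}V_1(K)$ with $\delta_{i,j}=c\,c_d'\,c_d''$. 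This is the first displayed inequality of the theorem; the second is its restatement, using $d_H(K,P)=\Cr{boundC}(K,n)n^{-2/(d-1)}\le\Cr{boundCbis}(K,n)n^{-2/(d-1)}$ and the definition of the isoperimetric ratio.

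The step I expect to be the main obstacle is the cap-covering bound of the first paragraph together with the overhead estimate of the third, rather than the inequality-chasing of the second: one needs a \emph{quantitative, uniform-in-$n$} version of the economic cap covering, valid for all (a priori non-smooth) convex bodies, and an honest bound on $A(K)$. The sharp leading constant $\int_{\partial K}\sqrt\kappa$ is classical only as an asymptotic and is not continuous on the space of convex bodies, so one cannot simply invoke it; instead one works throughout with the robust metric $d_m$ and its content, estimates these directly in terms of intrinsic volumes, and controls how the ``transition scale'' --- below which the leading term dominates --- degenerates as $\epsilon\to0$. That degeneration is exactly what produces the exponent $\alpha$, and reconciling it with $\beta$ is then a somewhat delicate bookkeeping exercise.
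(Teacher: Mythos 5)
Your outline is not the paper's argument, and it contains genuine gaps rather than merely unexecuted routine steps. The load-bearing claim is the ``economic cap covering'' inequality $N\le c_d\,S(K)\,\delta^{-(d-1)/2}+A(K)$ together with the overhead bound $A(K)\le c_d'''\,\epsilon^{-\alpha}$; you correctly flag these as the main obstacle, but they are exactly where the content of the theorem lives, and nothing in your sketch supplies them. Two specific problems. First, your reduction to $C^2$ bodies with positive curvature is inconsistent with your own later remark: $\int_{\partial K}\sqrt{\kappa}\,\dint\mathcal{H}^{d-1}$ is not continuous on $\mathcal{K}$, so ``all the functionals below being continuous'' fails for the one functional that matters, and non-asymptotic constants $c_d$, $A(K)$ established for smooth approximants need not pass to the limit body. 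Second, the heuristic for $A(K)$ (``the thin part of $\partial K$, where the curvature is of order $\rho^{-1}$'') presupposes smoothness and a particular degeneration mode; you give no mechanism converting the hypothesis $V_j(K)^{1/j}<\epsilon\,V_i(K)^{1/i}$ into a count of maximal caps. Likewise the Maclaurin--Newton--H\"older step is left entirely symbolic (``suitable non-negative exponents''): since the asymptotic computation \eqref{eq:BnAsymptotic} yields exponent $1$ rather than $\beta<1$, it is not clear your pointwise route produces the stated $\beta$, whose specific value has to come from somewhere.

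For contrast, the paper never touches curvature. It builds a $\delta$-net on $\partial(K+\B)$, whose cardinality is controlled for \emph{every} convex body by $V_{d-1}(K+\B)\,\delta^{-(d-1)}$ (Lemmas \ref{lem:MeasCap} and \ref{lem:deltaNet}), giving the fully explicit non-asymptotic Theorem \ref{thm:intermediate} with threshold $n\gtrsim V_{d-1}(K+\B)$; this single quantity replaces both your $S(K)$ and your $A(K)$. The exponents $\alpha$ and $\beta$ then arise from a scaling optimization absent from your proposal: one applies Theorem \ref{thm:intermediate} to $tK$, expands $V_{d-1}(tK+\B)$ as a polynomial in $t$ via the Steiner-type formula \eqref{eq:SteinerType}, uses the elongation hypothesis together with the isoperimetric inequality to make the coefficients of degree $\ge j_0:=\lceil(d-1)/2\rceil$ of size $O(\epsilon^{j_0})$, and minimizes $t^{-(d-1)/2}V_{d-1}(tK+\B)$ over the admissible range of $t$. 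The optimizer $t_\epsilon\asymp\epsilon^{-2j_0/d}$ produces $\beta$, and the requirement that $t_\epsilon$ be admissible produces the threshold exponent $\alpha$. To salvage your route you would need a uniform, quantitative cap-covering theorem with explicit overhead valid for all convex bodies, which is a substantial result in its own right and not something the present sketch provides.
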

Note that the case $i=1$ and $ j = \lceil (d-1) /2 \rceil $ implies all the others.
This is a consequence of the isoperimetric inequality.
We conjecture that Theorem \ref{thm:main} remains true for any $  1 \leq i < j \leq d-1 $
and that $\beta$ could be replaced by $1$.
Equation $\eqref{eq:BnAsymptotic}$ below gives support to this conjecture.

\medskip
Let us recall a few important results in order to motivate Theorem \ref{thm:main}.
If $K$ has a twice differentiable smooth boundary, we have a precise asymptotic approximation of $\Cr{boundCbis} ( K , n )$. After planar results due to T\'oth \cite{Toth1948} and McClure and Vitale \cite{McClure1975},   Schneider \cite{Schneider1981,Schneider1987} and Gruber \cite{Gruber1993article} succeeded in proving that
\begin{equation*}
\lim_{n \to \infty }\Cr{boundCbis} (K,n)
= \frac12 \left( \frac{\vartheta_{d-1}}{\kappa_{d-1}} \int_{\partial K} \kappa_C (\vect{x})^{1/2} \sigma ( \dint \vect{x}) \right) ^{2/(d-1)}, 
\end{equation*}
where $\vartheta_k$ is the minimum covering density of $\R^k$ with balls of fixed radius, 
$\kappa_k$ the volume of the $k$-dimensional ball,
$\kappa_C(\vect{x})>0$ the Gaussian curvature of $K$ at the point $\vect{x}$,
and $\sigma(\cdot)$ the surface area measure.
More recently, B\"or\"oczky \cite{Boroczky2000} removed the condition $\kappa_C(\vect{x})>0$. 

In many practical situations it is out of reach to compute the integral explicitly.
But if $K$ is elongated, we can have a good upper bound.
H\"older's inequality implies
\begin{align*}
\int_{\partial K} \kappa_C (\vect{x})^{1/2} \dint \sigma (\vect{x})
& \leq (2 d \kappa_d)^{1/2} V_{d-1} (K) ^{1/2}
\\
&= (2 d \kappa_d)^{1/2} \left[ \frac{ V_{d-1} (K) ^{1/(d-1)} }{ V_1 (K) } \right]^{(d-1)/2} V_1(K) ^{(d-1)/2}.
\end{align*}
Hence, with the isoperimetric inequality, for any $ 1 \leq i < j \leq d-1 $, we have that
\begin{equation}
\label{eq:BnAsymptotic}
\text{ if } K \text{ is } (\epsilon\colon  i,j) \text{-elongated then } 
\lim_{n \to \infty }\Cr{boundCbis} (K,n)
\leq \delta'_{i,j} \epsilon V_1(K) 
\end{equation}
with
\[ \delta'_{i,j} := 
\frac12 \left( \frac{\vartheta_{d-1}}{\kappa_{d-1}} \right)^{2/{d-1}} 
( 2 d \kappa_d )^{1/(d-1)} 
\frac{ V_{d-1} (\B) ^{1/(d-1)} }{ V_1 (\B) }
\frac{ V_i (\B) ^{1/i} }{ V_j (\B) ^{1/j} }.
\]
Therefore, we have a good asymptotic bound for elongated smooth convex bodies.

\medskip
The main goal of this paper is to extend these results to the non-asymptotic and non-smooth case.
The order $\epsilon$ in $\eqref{eq:BnAsymptotic}$ should be compared to the order $\epsilon^\beta$ in Theorem \ref{thm:main} for a fixed $n$.
It is especially of interest, for example, if we approximate a polytope with many facets by one with fewer facets.
This was considered by Reisner, Sch{\"u}tt and Werner in \cite{ReisnerSchuttWerner01}.
This paper was the starting point of our investigations.
A reader who has studied it will notice that principal ideas of their work are still present in our proofs.

Theorem \ref{thm:main} should be compared to the following result.
It was obtained independently in \cite{Bronshteyn1975} and \cite{Dudley1974}.
The constants were improved in \cite{ReisnerSchuttWerner01}.
There exist constants
$ \Cl[global]{boundFixedR} ( d ) $ 
and $ \Cl[global]{boundN}(d) $ 
such that 
$ \Cr{boundCbis} ( K , \Cr{boundN}(d) ) < \Cr{boundFixedR} ( d ) R(K) $,
i.e.
\begin{equation*}
d_H (K,P) \leq \Cr{boundFixedR} (d) \frac{ R(K) }{n^{2/(d-1)}} \text{ for } n > \Cr{boundN}(d) ,
\end{equation*}
where $ R(K) $ is the radius of the smallest ball containing $K$.
Note that $ R(K) $ is of the same order as $V_1(K)$.
Although this bound is sharp in general, it is worse if we assume that $K$ is elongated.
The following is an example of such a situation.
Fix a small $ \epsilon > 0 $.
Let $ K \subset \R^4 $ be a convex body.
It is well known that there exists an ellipsoid $ E $ such that
$ E \subset K \subset d E $ (see, e.g., \cite{John1948}).
Let $ r_1 > \cdots > r_4 $ be the lengths of the principal axes of $E$.
Assume that $ r_1 = 1 $ and $ r_2 $ is sufficiently small.
Thus, $K$ is $(\epsilon \colon 1,2)$-elongated.
For $ n > n_{1,2} \epsilon^{-3} $, Theorem~\ref{thm:main} says that 
$ \Cr{boundC} ( K , n ) 
< \delta_{1,2} \epsilon^{1/6}
\ll \Cr{boundFixedR} (d) R(K) $.

\medskip
Finally, we would like to highlight the following theorem.
Not only is it an important step in the proof of Theorem \ref{thm:main} but also an interesting result on its own.

\begin{theorem}
  \label{thm:intermediate}
  There exist absolute constants $\Cr{abs:1}$ and $\Cr{abs:2}$, independent of $d$, such that the following holds.
  Let $K$ be a convex body.
  Then, 
  $$ \Cr{boundCbis} ( K , \Cr{abs:1}^d d^{d/2}
  V_{d-1}(K+\B ) ) 
  < \Cr{abs:2} d 
  V_{d-1}(K+\B )^{2/(d-1)} .$$
  I.e. for any integer $ n > \Cr{abs:1}^d d^{d/2} V_{d-1}(K+\B ) $,
  there exists a polytope
  $P\supset K$ with $n$ facets such that
  $$\mathrm{d}_H(K,P) < \Cr{abs:2} d V_{d-1}(K+\B )^{2/(d-1)} n^{-2/(d-1)}.$$
\end{theorem}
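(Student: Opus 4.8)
The plan is to run a quantitative version of the circumscribed‑polytope construction of Bronshteyn--Ivanov and Dudley, but carried out on the \emph{unit outer parallel body} $K+\B$ rather than on $K$ itself. Working on $\partial(K+\B)$ is exactly what forces the surface content $V_{d-1}(K+\B)$ — instead of a circumradius — to govern the facet count, and it keeps all constants absolute apart from the explicit powers of $d$. Concretely, fix a parameter $\rho\in(0,\tfrac12]$ to be optimised at the end. Since $K+\B=\{p:\mathrm{dist}(p,K)\le 1\}$, every point of $\partial(K+\B)$ lies at distance exactly $1$ from $K$. Choose a maximal $\rho$-separated set $\{z_1,\dots,z_N\}\subset\partial(K+\B)$, which is then automatically $\rho$-dense. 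For each $i$ let $x_i$ be the metric projection of $z_i$ onto $K$, so $u_i:=z_i-x_i\in\Sp^{d-1}$ is an outer unit normal of $K$ at $x_i$, and put
\[
P:=\bigcap_{i=1}^{N}\bigl\{\,y\in\R^d:\langle u_i,y\rangle\le\langle u_i,x_i\rangle\,\bigr\}\supseteq K .
\]
The $u_i$ are $2\rho$-dense in $\Sp^{d-1}$ and hence positively span $\R^d$, so $P$ is a polytope with at most $N$ facets, circumscribed about $K$.

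The Hausdorff estimate is where the square root in the exponent appears, and it is the geometric heart. I would show $d_H(K,P)\le 4\rho^2$. Take $y\in P$, set $w:=$ the metric projection of $y$ onto $K$ and $r:=|y-w|=\mathrm{dist}(y,K)$; we may assume $r>0$ and let $v:=(y-w)/r$, which is an outer unit normal of $K$ at $w$. Then $z:=w+v\in\partial(K+\B)$, so some $z_i$ has $|z_i-z|\le\rho$; since the metric projection onto a convex set is $1$-Lipschitz, $|x_i-w|\le\rho$, whence $|u_i-v|=|(z_i-z)-(x_i-w)|\le 2\rho$ and $\langle u_i,v\rangle\ge 1-2\rho^2\ge\tfrac12$. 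Now $y\in P$ gives $\langle u_i,y-x_i\rangle\le 0$, i.e.
\[
r\,\langle u_i,v\rangle\le\langle u_i,x_i-w\rangle=\langle v,x_i-w\rangle+\langle u_i-v,x_i-w\rangle\le 0+|u_i-v|\,|x_i-w|\le 2\rho^2 ,
\]
where $\langle v,x_i-w\rangle\le 0$ because $v$ is a genuine outer normal of $K$ at $w$ while $x_i\in K$. It is exactly this sign that upgrades a linear bound to the quadratic one, and it is the crucial point of the argument; hence $r\le 4\rho^2$.

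For the facet count, the balls $B(z_i,\rho/2)$ are pairwise disjoint and lie in the $(\rho/2)$-neighbourhood of $\partial(K+\B)$, which equals $(K+(1+\tfrac\rho2)\B)\setminus(K+(1-\tfrac\rho2)\B)$. Expanding both volumes by the Steiner formula, using $(1+t)^m-(1-t)^m\le 2^{m}t$ for $0\le t\le 1$ together with the identity $\sum_{k=0}^{d-1}\tfrac{(d-k)\kappa_{d-k}}{2}V_k(K)=V_{d-1}(K+\B)$, one obtains $N\kappa_d(\rho/2)^d\le 2^{d-1}\rho\,V_{d-1}(K+\B)$, hence $N\le 2^{2d-1}\kappa_d^{-1}V_{d-1}(K+\B)\rho^{-(d-1)}$; since $\kappa_d^{-1}\le(d/2\pi)^{d/2}$ this is at most $A^{d}d^{d/2}V_{d-1}(K+\B)\rho^{-(d-1)}$ for an absolute $A$ (the $d^{d/2}$ being the price of covering a hypersurface by solid $d$-balls). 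Setting $\eta:=4\rho^2$ and combining: for every $\eta\in(0,1]$ there is a circumscribed polytope with at most $(2A)^{d}d^{d/2}V_{d-1}(K+\B)\,\eta^{-(d-1)/2}$ facets at Hausdorff distance $\le\eta$. Given an integer $n>\Cr{abs:1}^{d}d^{d/2}V_{d-1}(K+\B)$ with $\Cr{abs:1}:=2A$, pick $\eta$ with $(2A)^{d}d^{d/2}V_{d-1}(K+\B)\,\eta^{-(d-1)/2}=n$; then $\eta<1$, and because $2d/(d-1)\le 4$ and $d^{1/(d-1)}\le 2$ for $d\ge 2$ one has $\bigl((2A)^{d}d^{d/2}\bigr)^{2/(d-1)}\le\Cr{abs:2}\,d$ for an absolute $\Cr{abs:2}$, so $\eta\le\Cr{abs:2}\,d\,V_{d-1}(K+\B)^{2/(d-1)}n^{-2/(d-1)}$. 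Since $\Cr{boundCbis}(K,\cdot)$ is defined via polytopes with \emph{at most} that many facets, this yields both displayed inequalities, the strict signs following from a harmless enlargement of $\Cr{abs:2}$.

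I expect the main obstacle to be the facet count rather than the geometry: reading off exactly $V_{d-1}(K+\B)$ from the Steiner expansion of the tube around $\partial(K+\B)$ while dominating every lower‑order term by a factor of the form $(\text{absolute constant})^{d}$, so that $\Cr{abs:1}$ and $\Cr{abs:2}$ come out truly independent of the dimension. (One should also note that covering merely the sphere of normal directions would be blind to the elongation of $K$, which is why the surface of $K+\B$ itself must be used.)
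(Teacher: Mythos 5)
Your proposal is correct and follows essentially the same route as the paper: a $\rho$-net on $\partial(K+\B)$ is projected to $\partial K$ (which simultaneously controls the distance between footpoints and between normals), the circumscribed polytope is cut out by tangent hyperplanes at the projected points, the sign of $\langle v, x_i-w\rangle$ yields the quadratic Hausdorff bound $4\rho^2$, and the facet count is of order $c^d d^{d/2}V_{d-1}(K+\B)\rho^{-(d-1)}$. The only (harmless) deviation is in how the net's cardinality is bounded — you pack disjoint balls into the shell $(K+(1+\tfrac\rho2)\B)\setminus(K+(1-\tfrac\rho2)\B)$ and expand by the Steiner formula, whereas the paper bounds the surface area of caps of $\partial(K+\B)$ and invokes its abstract $\delta$-net lemma; both give the same $V_{d-1}(K+\B)$ up to factors absorbed into $\Cr{abs:1}^d d^{d/2}$.
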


\medskip
The paper is structured as follows.
In the next section setting, notation and background material from convex geometry are provided.
In Section \ref{sec:deltaNet} we give a general framework to build a $\delta$-net on an abstract measured metric space satisfying mild properties, and we apply it to prove Theorem \ref{thm:intermediate}.
The proof of Theorem \ref{thm:main} is given in Section \ref{sec:MainProof}. 
It uses a shape factor introduced and described in Section \ref{sec:ShapeFactor}.

\section{Setting, notation and background}
\label{sec:Setting}
We work in the euclidean space $\R^d$ with origin $\origin$, scalar product $ \langle \cdot , \cdot \rangle $ and associated norm $|\cdot|$.
We denote by $ B ( \vect{x} , r ) $ and $ S ( \vect{x} , r ) $, respectively, the ball and the sphere of center $ \vect{x} $ and radius $r$.
The unit ball $ \B = B ( \origin , 1 )$ has volume $\kappa_d$
and the unit sphere $ \Sp^{d-1} = S ( \origin , 1 ) $ has surface area $\omega_d = d \kappa_d $.
We denote by $\mathcal{K}$ the set of \textit{convex bodies} (compact and convex sets of $\R^d$) with at least $2$ points.
This set is equipped with the Hausdorff distance
\[
 d_H (K,L) = \min_{r\geq0} \left( K \subset L + r \B , L \subset K + r \B \right) 
\] and its associated topology and Borel structure.
The same holds for the future subsets of $\mathcal{K}$ that we will encounter in this paper.
The set $\mathcal{K}$ is also equipped with Minkowski sum and scale action. 
For any $ t \in \R $ and $ A , B \in \mathcal{K} $, we have
\[
t A := \{ t \vect{a} \mid \vect{a} \in A \},
\quad
A + B := \{ \vect{a} + \vect{b} \mid \vect{a} \in A , \vect{b} \in B \}.
\]
We denote by $ \partial K $ the boundary of a given convex body $ K $.

Let $f:\mathcal{K}\to\R$ be a map.
If there exists $k\in\R$ such that 
$ f( t K ) = t^k K $ for any $ K \in\mathcal{K}$ and $t>0$,
we say that $f$ is \textit{homogeneous of degree $k$}.
We say that $f$ is \textit{scale invariant} if $f$ is homogeneous of degree $0$.
If $f(K+\vect{x})=f(K)$ for any $K\in\mathcal{K}$ and $\vect{x}\in\R^d$, we say that $f$ is \textit{translation invariant}.
We say that $f$ is a \textit{shape factor} if $f$ is scale and translation invariant.

For the following facts of convex geometry we refer the reader to \cite{Gruber07}.

\textbf{The Steiner Formula and Intrinsic Volumes.}
We denote by $V_d(\cdot)$ the volume, i.e., the $d$-dimensional Lebesgue measure.
The Steiner Formula tells us that there exist functionals 
$V_i : \mathcal{K} \to [0,\infty) $, for $ 0 \leq i \leq d $, such that
for any $K\in \mathcal{K}$ and $ \epsilon \geq 0 $
\[
V_d ( K + \epsilon \B ) = \sum_{i=0}^d \epsilon^{d-j} \kappa_{d-j} V_j(K).
\]
$V_i(K)$ is called the \textit{$i$-th intrinsic volume} of $K$.
Some of the intrinsic volumes have a clear geometric meaning.
$V_d$ is the volume.
If $K$ has non-empty interior, then
\[ V_{d-1} (K) = \frac12 \mathcal{H}^{d-1} ( \partial K ), \]
where $ \mathcal{H}^{d-1} ( \partial K ) $ is the $ (d-1) $-dimensional Hausdorff measure of the boundary $ \partial K $ of $ K $.
Thus, $ 2 V_{d-1} $ is the surface area.
$V_1$ is proportional to the \textit{mean width $\MeanWidth$}.
More precisely,
\[ 
\frac{d \kappa_d}{2} \MeanWidth (K) 
= \kappa_{d-1} V_1 (K)
= \int_{\Sp^{d-1}} h ( K , \vect{u} ) \sigma( \dint \vect{u} ),
\]
where $\sigma$ is the surface area measure on $\Sp^{d-1}$
and $h(K, \vect{u} ):=\max\{\langle \vect{x}, \vect{u} \rangle\mid \vect{x}\in K\}$ is the value of the \textit{support function} of $K$ at $ \vect{u} $.
$V_0(K)=1$ is the Euler characteristic.
For $ 1 \leq i < j \leq d $ and $K\in\mathcal{K}$, we call the shape factor $ V_j(K)^{1/j} V_i(K)^{-1/i} $ the $(i,j)$-\textit{isoperimetric ratio} of $K$.

\textbf{The Isoperimetric Inequality.}
Let $B \subset \R^d$ be a $d$-dimensional ball.
For any $ K \in \mathcal{K} $ and for any $ 1 \leq i < j \leq d $,
\begin{equation}
\label{eq:IsoperimetricIneq}
V_j(K) ^{1/j} \leq \frac{ V_j ( B ) ^{1/j} }{ V_i ( B ) ^{1/i} } V_i(K) ^{1/i},
\end{equation}
with equality if and only if $K$ is a ball.

\textbf{A Steiner-type Formula.}
For any $ K \in \mathcal{K} $
\begin{equation}
\label{eq:SteinerType}
V_{d-1} (K+\B) 
= \sum_{k=0}^{d-1} \frac{(d-k) \kappa_{d-k} }{2d} V_k (K).
\end{equation}

The isoperimetric inequality and the Steiner-type formula imply the next fact.

\textbf{Fact :}
Let $ d \geq 3 $,
$I$ be an interval (convex hull of two distinct points),
$B$ be a ball,
and $ K \in \mathcal{K} $ neither an interval nor a ball.
Assume that $ V_1(I) = V_1(K) = V_1(B) $.
Note that $ V_1(I) $ is just the length of the segment $I$.
Then, we have
\begin{equation}
\label{eq:IneqSurfaceConvexPlusBall}
V_{d-1} ( I + \B ) 
< V_{d-1} ( K + \B )
< V_{d-1} ( B + \B ) .
\end{equation}

\textbf{Convention about the constants.}
The constants are denoted by $c_i$, where $i$ is an index.
They depend on $d$ but are independent of any other quantity.
We estimate the dependence on $d$ using the Landau notation.
By $c_i = \Approx ( f(d) ) $ we mean that there exist absolute constants $ k_0 , k_1 > 0 $ such that
$ k_0 f(d) < c_i < k_1 f(d) $
for any $d$.

\section{\texorpdfstring{$\delta$}{delta}-net and polytopal approximation}
\label{sec:deltaNet}
First, let us set some notation.
Assume $ M $ is metric space with distance $ d_M $.
I.e. a set $ M $ and a function $ d_M \colon M \times M \to [0,\infty)$ such that, for any $x,y,z\in M$, $d_M(x,y)=0$ if and only if $x=y$, $d_M(x,y)=d_M(y,x)$, and $d_M(x,z)\leq d_M(x,y) + d_M(y,z)$.
We write
$ B_M ( \vect{x} , r ) := \{ \vect{y} \in M \mid d_M ( \vect{x} , \vect{y} ) \leq r \} $.
\begin{definition}
  Let $M$ be a metric space and $S$ a discrete subset of $M$.
  We say that 
  \begin{itemize}
    \item $ S $ is a \textit{$\delta$-covering of $M$} if $ \cup_{ \vect{x} \in S } B_M ( \vect{x} , \delta ) = M $,
    \item $ S $ is a \textit{$\delta$-packing of $M$} if 
    $ B_M ( \vect{x} , \delta ) \cap B_M ( \vect{y} , \delta ) = \emptyset $
    for any $ \vect{x} \neq \vect{y} \in S $,
    \item $ S $ is a \textit{$\delta$-net of $M$} if it is both a \textit{$\delta$-covering of $M$} and a \textit{$(\delta/2)$-packing of $M$}.
  \end{itemize}
\end{definition}

Note that, in the poset of $(\delta/2)$-packings ordered under inclusion, a maximal element is a $\delta$-net.
Zorn's lemma shows that, for any metric space $M$, there exists a $\delta$-net.

In the following lemma, 
under some assumptions on $\psi$, we give bounds for the cardinality of a $\delta$-net.
The construction of these bounds is adapted from the proof of the following well known result, see e.g. \cite[Proposition 31.1]{Gruber07}. 
If $C\subset\R^d$ is a convex body with non empty interior such that $C=-C$, then there exists a packing of translated copies of $C$ in $\R^d$ of density at least $2^{-d}$, where, roughly speaking, density means the proportion of $\R^d$ covered by the translated copies of $C$.
%

\begin{lemma}
  \label{lem:Saturated}
  Let $M$ be a space equipped with a measure $ \psi $ and a measurable metric $ d_M $.
  Assume that $\psi(M)<\infty$.
  Let $ \delta_0 > 0 $ and $ S $ be a $\delta$-net of $M$ with $\delta \in ( 0 , \delta_0 ) $.
  Let $ k > 0 $.
  \begin{enumerate}
    \item Assume there exists a constant $ c > 0 $ such that,
    for any $ \vect{x} \in M $ and $ r \in ( 0 , \delta_0 ) $, 
    it holds that
    $ c r^k
    > \psi ( B_M ( \vect{x} , r ) ) $.
    Then
    $ \card{S}
    > c ^{-1} \psi(M) \delta^{-k} $.
    \item Assume there exists a constant $ c' > 0 $ such that,
    for any $ \vect{x} \in M $ and $ r \in ( 0 , \delta_0 ) $,
    it holds that
    $ c' r^k
    < \psi ( B_M ( \vect{x} , r ) ) $.
    Then
    $ \card{S}
    < 2^k c'^{-1} \psi(M) \delta^{-k} $.
  \end{enumerate}
\end{lemma}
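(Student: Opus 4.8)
The plan is to use the two defining properties of a $\delta$-net separately: the $\delta$-covering property yields the lower bound in~(1), and the $(\delta/2)$-packing property yields the upper bound in~(2). In both cases one compares $\psi(M)$ with a sum, over $\vect{x}\in S$, of measures $\psi(B_M(\vect{x},r))$ of metric balls --- using sub-additivity of $\psi$ in the first case and pairwise disjointness of the balls in the second --- and then inserts the assumed polynomial bounds $c\,r^k$ or $c'\,r^k$. The measurability of these balls, needed for both comparisons, is exactly what the hypothesis that $d_M$ be measurable provides.

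For part~(1): if $S$ is infinite the inequality is trivial, since its right-hand side is the finite number $c^{-1}\psi(M)\delta^{-k}$ (this is where $\psi(M)<\infty$ enters), so we may assume $S$ is finite and nonempty. Because $S$ is a $\delta$-covering, $M=\bigcup_{\vect{x}\in S}B_M(\vect{x},\delta)$, and sub-additivity of $\psi$ gives $\psi(M)\le\sum_{\vect{x}\in S}\psi(B_M(\vect{x},\delta))$. Since $\delta\in(0,\delta_0)$, the hypothesis applies with $r=\delta$ to each of the finitely many summands, so $\psi(M)<\card{S}\,c\,\delta^{k}$, which rearranges to $\card{S}>c^{-1}\psi(M)\delta^{-k}$.

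For part~(2): because $S$ is a $(\delta/2)$-packing, the balls $B_M(\vect{x},\delta/2)$ with $\vect{x}\in S$ are pairwise disjoint and contained in $M$, so $\sum_{\vect{x}\in S}\psi(B_M(\vect{x},\delta/2))\le\psi(M)<\infty$; since by hypothesis each summand exceeds $c'(\delta/2)^{k}>0$ and $\delta/2\in(0,\delta_0)$, this already forces $S$ to be finite, and moreover $\card{S}\,c'\,(\delta/2)^{k}<\psi(M)$, i.e.\ $\card{S}<2^{k}c'^{-1}\psi(M)\delta^{-k}$. There is no real obstacle in this argument; the only points deserving care are the finiteness and measurability bookkeeping just mentioned, and the observation that the loss of the factor $2^{k}$ is precisely the price of having to pack balls of half the covering radius. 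This construction mirrors the classical density-$2^{-d}$ packing estimate recalled before the lemma, with $c\,r^{k}$ and $c'\,r^{k}$ standing in for the volume of a Euclidean ball of radius $r$.
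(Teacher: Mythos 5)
Your proof is correct and follows essentially the same route as the paper's: sub-additivity over the covering balls of radius $\delta$ for part (1), and disjointness of the packing balls of radius $\delta/2$ for part (2), then inserting the polynomial bounds. The extra bookkeeping you add about finiteness of $S$ is a harmless (and slightly more careful) elaboration of the same argument.
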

\begin{proof}
  To prove $(1)$, we only have to observe that since $ S $ is a $\delta$-covering, we have that
  \[ \psi ( M ) 
  \leq \sum_{ \vect{x} \in S } \psi ( B_M ( \vect{x} , \delta ) )
  < \card{S} c \delta^k \]
  because $ M = \cup_{ \vect{x} \in S } B_M ( \vect{x} , \delta ) $.
  The proof of $(2)$ is similar.
  Since $ S $ is a $(\delta/2)$-packing, we have that
  \[ \psi ( M ) 
  \geq \sum_{ \vect{x} \in S } \psi ( B_M ( \vect{x} , \delta / 2 ) )
  > \card{S} c' \delta^k 2^{-k} \]
  because, for any distinct $ \vect{x} , \vect{y} \in S $, we have 
  $ B_M ( \vect{x} , \delta / 2 ) \cap B_M ( \vect{y} , \delta / 2 ) = \emptyset $.
\end{proof}

In Lemma \ref{lem:deltaNet}, we will apply the previous lemma to the space $M=\partial(K+\B)$, where $K$ is an arbitrary convex body and $M$ is equipped with the surface area measure and the restriction of the euclidean distance.
In this space the balls are caps on the boundary of the convex body $D=K+\B$, where a cap is defined as follow.
For a convex body $D$,
a point $ \vect{d} \in D $ (usually $ \vect{d} \in \partial D $),
and a positive radius $\delta>0$, 
we define the \textit{cap of $D$ of center $\vect{d}$ and radius $\delta$} to be the set
\[ \Cap{D}{\vect{d}}{\delta}
= \{ \vect{y} \in \partial D \mid | \vect{d} - \vect{y} | <\delta\}. \]
Note that our definition differs slightly from the more usual one, where a cap is the intersection of the boundary $\partial D$ with a half-space.
In the next lemma we give bounds for the surface area of caps of radius $\delta\in(0,\delta_0)$, of bodies of the form $K+\B$, with $\delta_0=1$ independent from $K$.
Precise bounds for spherical caps are known, see e.g. Lemma 2.1 in \cite{BriedenAndAl01}, Lemmas 2.2 and 2.3 in \cite{Ball97} or Remark 3.1.8 in \cite{ArtsteinAvidanAndAll15}.
Lemma 6.2 in \cite{RichardsonAndAl08} gives bounds for more general bodies then the sphere, namely those with $\mathcal{C}^2$ boundary of positive curvature, but with a $\delta_0$ depending on $K$.
It does not seems to the author that we can deduce easily Lemma \ref{lem:MeasCap} from these results.
\begin{lemma}
  \label{lem:MeasCap}
  Let $ K \in \mathcal{K} $ and $ D = K + \B $.
  Let $\vect{d} \in \partial D $ and $ \delta \in ( 0 , 1 ) $.
  Then 
  \[ \delta^{d-1} \kappa_{d-1} 2^{-(d-1)}
  < \mathcal{H}^{d-1} ( \Cap{D}{\vect{d}}{\delta} ) 
  < \delta^{d-1} \kappa_{d-1} d .
  \]
\end{lemma}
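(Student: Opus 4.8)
The plan is to bound $\mathcal{H}^{d-1}(\Cap{D}{\vect{d}}{\delta})$ by comparing the cap with its orthogonal projection onto the tangent hyperplane of $D$ at $\vect{d}$, which I will trap between two concentric $(d-1)$-dimensional balls. First I would translate so that $\vect{d}=\origin$ (and note $d\geq 2$, since the statement fails for $d=1$). Let $\vect{k}_0\in K$ be the point of $K$ nearest to $\origin$; since $\origin\in\partial(K+\B)$ one has $|\vect{k}_0|=1$, so $\vect{u}:=-\vect{k}_0$ is an outer unit normal to $D$ at $\origin$, the unit ball $B(\vect{k}_0,1)=B(-\vect{u},1)$ is contained in $D$ and meets $\partial D$ only at $\origin$, and the supporting hyperplane at $\origin$ gives $D\subset\{\vect{z}\in\R^d:\langle\vect{z},\vect{u}\rangle\leq 0\}$. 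Put $H=\vect{u}^{\perp}$, write $C=\Cap{D}{\origin}{\delta}$, and let $p\colon\R^d\to H$ be the orthogonal projection.

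The technical core is a normal estimate. Let $\vect{y}\in C$ and let $\vect{v}$ be any outer unit normal to $D$ at $\vect{y}$. Using $h(D,\vect{v})=h(K,\vect{v})+1$ and writing $\vect{y}=\vect{k}+\vect{b}$ with $\vect{k}\in K$ and $|\vect{b}|\leq 1$, the equality $\langle\vect{y},\vect{v}\rangle=h(D,\vect{v})$ forces $\vect{b}=\vect{v}$, so $\vect{y}-\vect{v}=\vect{k}\in K$ and hence $B(\vect{y}-\vect{v},1)\subset D$; since $\origin\in\partial D$ lies outside the interior of this ball, $|\vect{y}-\vect{v}|\geq 1$, i.e. $\langle\vect{y},\vect{v}\rangle\leq\tfrac12|\vect{y}|^2<\tfrac12\delta^2$. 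Feeding $-\vect{u}+\vect{v}\in B(-\vect{u},1)\subset D$ into the supporting inequality $\langle\vect{z}-\vect{y},\vect{v}\rangle\leq 0$ then yields $\langle\vect{u},\vect{v}\rangle\geq 1-\langle\vect{y},\vect{v}\rangle>1-\tfrac12\delta^2>\tfrac12$. Thus the outer normal stays uniformly transverse to $H$ along $C$, which together with convexity makes $p$ injective on $C$: if $p(\vect{y}_1)=p(\vect{y}_2)$ with $\vect{y}_2-\vect{y}_1=t\vect{u}$, $t>0$, then $\langle\vect{y}_2-\vect{y}_1,\vect{v}_1\rangle=t\langle\vect{u},\vect{v}_1\rangle>0$, contradicting $\vect{y}_2\in D$. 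Consequently $C$ is the graph of a Lipschitz function over the open set $U:=p(C)\subset H$, and the tangential Jacobian of $p$ along $C$ at $\vect{y}$ equals $\langle\vect{u},\vect{v}\rangle$.

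Both bounds then follow quickly. For the upper bound, the area formula gives $\mathcal{H}^{d-1}(U)=\int_C\langle\vect{u},\vect{v}\rangle\,\dint\mathcal{H}^{d-1}\geq(1-\tfrac12\delta^2)\,\mathcal{H}^{d-1}(C)$, and since $U\subset\{\vect{x}\in H:|\vect{x}|<\delta\}$ we have $\mathcal{H}^{d-1}(U)\leq\kappa_{d-1}\delta^{d-1}$; as $1-\tfrac12\delta^2>\tfrac12$ this gives $\mathcal{H}^{d-1}(C)<2\kappa_{d-1}\delta^{d-1}\leq d\,\kappa_{d-1}\delta^{d-1}$. For the lower bound, $p$ is $1$-Lipschitz, so it is enough to bound $\mathcal{H}^{d-1}(U)$ from below. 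Given $\vect{x}\in H$ with $|\vect{x}|\leq\delta/2$, the line $\vect{x}+\R\vect{u}$ contains $\vect{x}-\vect{u}\in\mathrm{int}\,B(-\vect{u},1)\subset\mathrm{int}\,D$ and, $D$ being compact and inside $\{\langle\cdot,\vect{u}\rangle\leq 0\}$, it leaves $D$ at a point $\vect{y}=\vect{x}+t\vect{u}\in\partial D$ with $t\in(-1,0]$; from $\vect{y}\notin\mathrm{int}\,B(-\vect{u},1)$ one gets $(t+1)^2=|\vect{y}+\vect{u}|^2-|\vect{x}|^2\geq 1-|\vect{x}|^2$, hence $|t|\leq 1-\sqrt{1-|\vect{x}|^2}\leq|\vect{x}|^2\leq\delta^2/4$, so $|\vect{y}|^2=|\vect{x}|^2+t^2<\delta^2$ and $\vect{y}\in C$ projects to $\vect{x}$. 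Hence $U$ contains the closed ball of radius $\delta/2$ in $H$; being open it contains a strictly larger one, so $\mathcal{H}^{d-1}(C)\geq\mathcal{H}^{d-1}(U)>\kappa_{d-1}(\delta/2)^{d-1}=\kappa_{d-1}2^{-(d-1)}\delta^{d-1}$.

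The part I expect to need the most care is the regularity bookkeeping: since $K$ need be neither smooth nor strictly convex, I must justify that $C$ is rectifiable and that the projection/area-formula step is legitimate. This is handled by the standard fact that $\partial(K+\B)$ is a $\mathcal{C}^{1,1}$ hypersurface (or, more modestly, by the Lipschitz-graph structure that the boundary of any convex body has over its supporting hyperplanes), together with the uniform estimate $\langle\vect{u},\vect{v}\rangle>1/2$, which is precisely what prevents the cap from wrapping around and keeps its projection inside the ball of radius $\delta$.
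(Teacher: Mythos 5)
Your proof is correct, and its overall strategy --- exploit the unit ball that rolls freely inside $D=K+\B$ and compare the cap with its orthogonal projection onto the tangent hyperplane $H$ at the center --- is the same as the paper's. The lower bounds are essentially identical: the paper observes that the projection of the cap onto $H$ contains a disc of radius $\delta\sqrt{1-\delta^2/4}$, you show it contains one of radius $\delta/2$; both beat the constant $2^{-(d-1)}$. The upper bounds differ in mechanism. The paper compares the cap with an explicit surface, namely the disc of radius $\delta$ in $H$ together with the lateral boundary of a cylinder of radius $\delta$ and height $\delta^2$ (using that the cap stays within depth $\delta^2/2$ of $H$), which yields the constant $1+(d-1)\delta<d$. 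You instead prove the uniform normal estimate $\langle\vect{u},\vect{v}\rangle>1-\tfrac12\delta^2$ and apply the area formula to the injective projection $p|_C$, which yields the better constant $(1-\tfrac12\delta^2)^{-1}<2\le d$. Your route costs a little more machinery (rectifiability, the area formula, and some argument for the openness of $p(C)$ in the strict lower bound), but it makes rigorous exactly the steps the paper delegates to its two figures, and your identity $\vect{b}=\vect{v}$ is a clean quantitative expression of the rolling-ball property that the paper uses only implicitly. Two small remarks: your assertion that $B(-\vect{u},1)$ meets $\partial D$ only at $\origin$ is false in general (take $K$ a segment and the center of the cap on an end cap of the sausage $K+\B$), but you never use it; and for the strict inequality in the lower bound it is simpler to rerun your lifting computation for $|\vect{x}|\le(\tfrac12+\eta)\delta$ with $\eta>0$ small than to invoke openness of $U$.
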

\begin{proof}
  For the lower bound, we approximate the cap by a $(d-1)$-dimensional disc of radius 
  $ \delta \sqrt{ 1 - \delta^2 / 4 } $
  (see Figure \ref{fig:LowerBoundCap}).
  Let $H$ be the tangent hyperplane to $D$ at $\vect{d}$.
  We have
  \begin{figure}
    \includegraphics*{./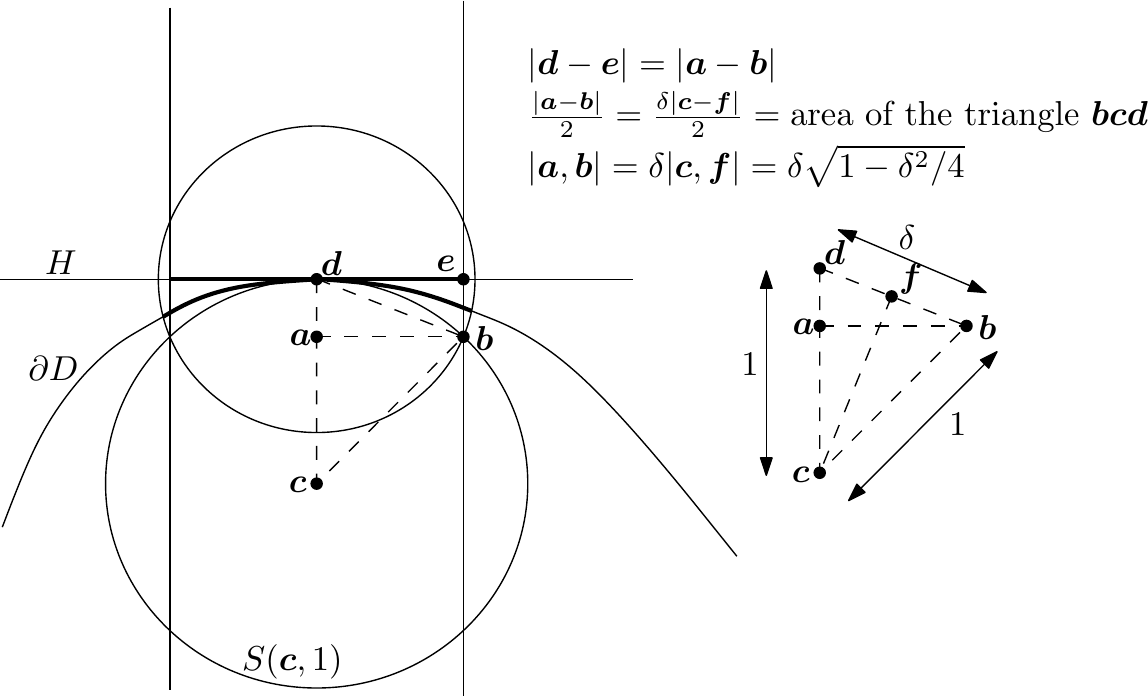}
    \caption{ $\mathcal{H}^{d-1}( \Cap{D}{ \vect{d} }{\delta} )
      \geq \delta^{d-1} \kappa_{d-1} \left( 1 - \frac{ \delta^2 } { 4 } \right)^{(d-1)/2} $}
    \label{fig:LowerBoundCap}
  \end{figure}  
  \begin{align*}
  \mathcal{H}^{d-1}( \Cap{D}{ \vect{d} }{\delta} )
  & \geq 
  \mathcal{H}^{d-1}( H \cap B ( \vect{d} , d ( \vect{d} , \vect{e} ) ) )
  \\
  & = \delta^{d-1} \kappa_{d-1} \left( 1 - \frac{ \delta^2 } { 4 } \right)^{(d-1)/2}
  \\
  & > \delta^{d-1} \kappa_{d-1} \left( \frac{ 3 } { 4 } \right)^{(d-1)/2}
  > \delta^{d-1} \kappa_{d-1} 2^{-(d-1)} .
  \end{align*}
  For the upper bound, we approximate the cap by the union of a $(d-1)$-dimensional disc of radius $\delta$ and the spherical boundary of a cylinder of radius $\delta$ and height $ \delta^2 $ 
  (see Figure \ref{fig:UpperBoundCap}).
  \begin{figure}
    \includegraphics*{./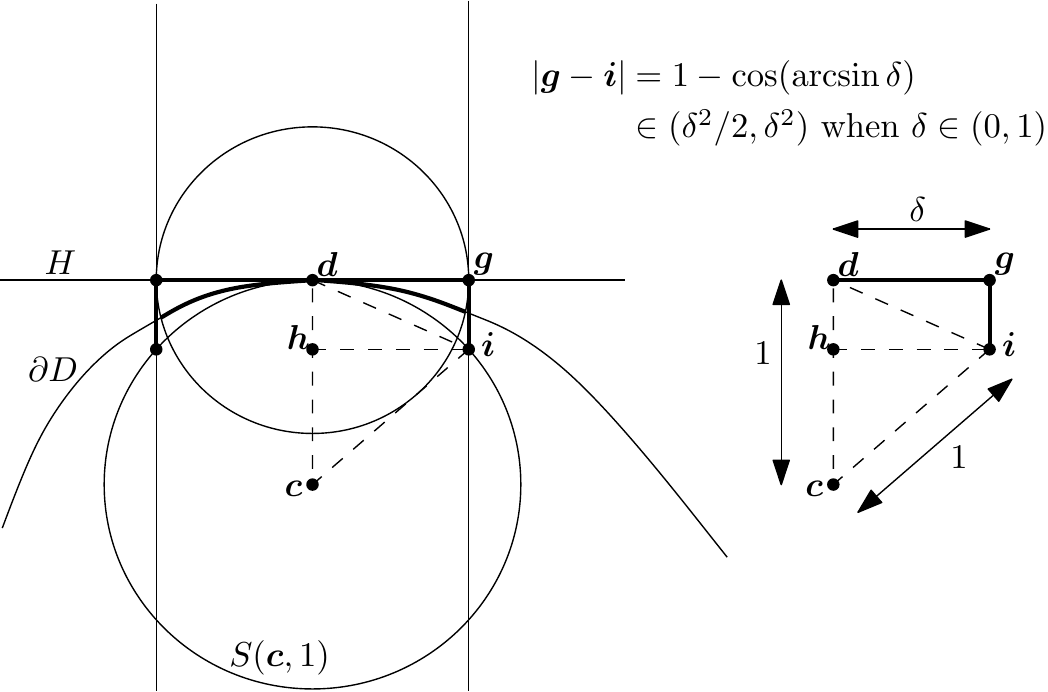}
    \caption{ $\mathcal{H}^{d-1}( \Cap{D}{ \vect{d} }{\delta} )
      < \delta^{d-1} \kappa_{d-1} 
      + \delta^{d-2} \omega_{d-1} \delta^2 $}
    \label{fig:UpperBoundCap}
  \end{figure}  
  Thus
  \begin{align*}
  \mathcal{H}^{d-1}( \Cap{D}{ \vect{d} }{\delta} )
  & < 
  \mathcal{H}^{d-1}(  H \cap B ( \vect{d} , \delta ) )
  + \mathcal{H}^{d-2}(  H \cap S ( \vect{d} , \delta ) ) \delta^2  
  \\
  & =
  \delta^{d-1} \kappa_{d-1} 
  + \delta^{d-2} \omega_{d-1} \delta^2
  \\
  & =
  \delta^{d-1} \kappa_{d-1} \left( 1 + \delta ( d - 1 ) \right)
  <
  \delta^{d-1} \kappa_{d-1} d .
  \end{align*}
\end{proof}

Set 
$ \Cl[global]{12min} 
:= 2 d^{-1} \kappa_{d-1}^{-1} 
= \Approx ( d^{1/2} )^d $
and
$ \Cl[global]{12} 
:= 4^{d} \kappa_{d-1}^{-1}
= \Approx ( d^{1/2} )^d $.
As a direct consequence of the two previous lemmas and the fact that
$\mathcal{H}^{d-1}( \partial D ) = 2 V_{d-1} ( D )$,
we have the following lemma. 
We omit the proof.
\begin{lemma}
  \label{lem:deltaNet}
  Let $ K \in \mathcal{K} $ and $ D = K + \B $,
  $ \delta \in ( 0 , 1 ) $
  and $S$ a $\delta$-net of the boundary $\partial D$. 
  We have that
  \[ \Cr{12min} V_{d-1}(D) \delta^{-(d-1)} 
  < | S | 
  < \Cr{12} V_{d-1}(D) \delta^{-(d-1)} . \]
\end{lemma}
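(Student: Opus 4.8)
The plan is to apply Lemma~\ref{lem:Saturated} to the metric measure space $M=\partial D$, where $\psi$ is the $(d-1)$-dimensional Hausdorff measure restricted to $\partial D$ and $d_M$ is the restriction of the Euclidean distance to $\partial D$. Since $D=K+\B$ is a convex body with non-empty interior, we have $\psi(M)=\mathcal{H}^{d-1}(\partial D)=2V_{d-1}(D)<\infty$, and $d_M$, being continuous, is measurable; so the standing hypotheses of Lemma~\ref{lem:Saturated} are met. The key point is that a closed metric ball $B_M(\vect{x},r)$ differs from the cap $\Cap{D}{\vect{x}}{r}$ of Lemma~\ref{lem:MeasCap} only by the set $S(\vect{x},r)\cap\partial D$, which is $\mathcal{H}^{d-1}$-null for all but at most countably many $r$ (the map $r\mapsto\mathcal{H}^{d-1}(\Cap{D}{\vect{x}}{r})$ being monotone and bounded, hence having countably many jumps); so, possibly after an arbitrarily small adjustment of the radius, Lemma~\ref{lem:MeasCap} yields $\kappa_{d-1}2^{-(d-1)}r^{d-1}<\psi(B_M(\vect{x},r))<\kappa_{d-1}d\,r^{d-1}$ for every $\vect{x}\in\partial D$ and $r\in(0,1)$.

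These are precisely the comparison hypotheses of Lemma~\ref{lem:Saturated} with $k=d-1$ and $\delta_0=1$, taking $c=\kappa_{d-1}d$ in part~(1) and $c'=\kappa_{d-1}2^{-(d-1)}$ in part~(2). A $\delta$-net is by definition both a $\delta$-covering and a $(\delta/2)$-packing, so both parts apply to $S$. Substituting $\psi(M)=2V_{d-1}(D)$, part~(1) gives
\[ |S| > c^{-1}\psi(M)\,\delta^{-(d-1)} = \frac{2V_{d-1}(D)}{\kappa_{d-1}d}\,\delta^{-(d-1)} = \Cr{12min}\,V_{d-1}(D)\,\delta^{-(d-1)}, \]
and part~(2) gives
\[ |S| < 2^{d-1}c'^{-1}\psi(M)\,\delta^{-(d-1)} = 2^{2d-1}\kappa_{d-1}^{-1}\,V_{d-1}(D)\,\delta^{-(d-1)} < 4^{d}\kappa_{d-1}^{-1}\,V_{d-1}(D)\,\delta^{-(d-1)} = \Cr{12}\,V_{d-1}(D)\,\delta^{-(d-1)}, \]
where we used $2^{2d-1}<4^{d}$. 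This is exactly the two-sided estimate asserted in Lemma~\ref{lem:deltaNet}.

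I do not expect any real obstacle here: all the geometric content is already contained in Lemmas~\ref{lem:MeasCap} and~\ref{lem:Saturated}, and what remains is the bookkeeping of the constants $\Cr{12min}$ and $\Cr{12}$ together with the elementary slack $2^{2d-1}<4^{d}$. The only point of care is the open-versus-closed ball identification handled above, which is presumably also why the paper omits the proof.
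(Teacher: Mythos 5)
Your proof is correct and takes exactly the route the paper intends: the paper omits the proof, stating the lemma is ``a direct consequence'' of Lemmas~\ref{lem:Saturated} and~\ref{lem:MeasCap} together with $\mathcal{H}^{d-1}(\partial D)=2V_{d-1}(D)$, which is precisely what you carry out, and your constant bookkeeping (taking $k=d-1$, $\delta_0=1$, and using the slack $2^{2d-1}<4^{d}$ to land on $\Cr{12}$) checks out. Your extra care with the open-cap versus closed-ball discrepancy is a reasonable way to patch a detail the paper glosses over.
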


For a convex body $K$ with boundary $\partial K$ of differential class $\mathscr{C}^1$ and $ \vect{x} \in \partial K $, 
we denote by $ \vect{v} ( \vect{x} ) $ the outer unit normal vector of $ K $ at $ \vect{x} $.
Using Lemma \ref{lem:deltaNet}, we can prove the two following lemmas in a similar way as Propositions 2.4 and Proposition 2.7 of \cite{ReisnerSchuttWerner01}.
We will only sketch the proofs.
\begin{lemma}
  \label{lem:deltaNetBis}
  Let $ K \in \mathcal{K} $ with $ \partial K $ of class $ \mathscr{C}^1 $ and 
  $ \delta \in ( 0 , 1 ) $.
  There exists a $\delta$-net of $\partial K$, with respect to the distance 
  $ \MetricBoundary ( \vect{x} , \vect{y} ) 
  = \max ( | \vect{x} - \vect{y} | , | \vect{v} ( \vect{x} ) - \vect{v} ( \vect{y} ) | ) $,
  of cardinality at most 
  $ \Cr{12} V_{d-1}(K+\B ) \,\delta^{-(d-1)} $.
\end{lemma}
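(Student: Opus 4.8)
The plan is to reduce the statement about the metric $\MetricBoundary$ on $\partial K$ to the already-proven Lemma~\ref{lem:deltaNet} about the Euclidean metric on $\partial(K+\B)$. The key observation is the Gauss map correspondence: since $\partial K$ is of class $\mathscr{C}^1$, the outer unit normal $\vect{v}(\vect{x})$ is well defined for every $\vect{x}\in\partial K$, and the map $\vect{x}\mapsto \vect{x}+\vect{v}(\vect{x})$ is a bijection from $\partial K$ onto $\partial(K+\B)$. Indeed, the boundary point of $K+\B$ with outer normal $\vect{u}$ is exactly $\vect{x}+\vect{u}$, where $\vect{x}\in\partial K$ is the (unique, by $\mathscr{C}^1$ regularity applied to the supporting hyperplane) point with $\vect{v}(\vect{x})=\vect{u}$. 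Let me call this map $\Phi\colon\partial K\to\partial(K+\B)$, $\Phi(\vect{x})=\vect{x}+\vect{v}(\vect{x})$.

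First I would record the elementary metric comparison: for $\vect{x},\vect{y}\in\partial K$,
\[
|\Phi(\vect{x})-\Phi(\vect{y})|
= |(\vect{x}-\vect{y})+(\vect{v}(\vect{x})-\vect{v}(\vect{y}))|
\le |\vect{x}-\vect{y}| + |\vect{v}(\vect{x})-\vect{v}(\vect{y})|
\le 2\,\MetricBoundary(\vect{x},\vect{y}).
\]
Conversely, I would like a bound of the form $\MetricBoundary(\vect{x},\vect{y})\le C\,|\Phi(\vect{x})-\Phi(\vect{y})|$, but this need not hold pointwise; instead the idea (following the structure of Propositions~2.4 and~2.7 of \cite{ReisnerSchuttWerner01}) is to run the net construction directly on $\partial(K+\B)$ and transport it. Concretely: take a $\delta'$-net $S'$ of $\partial(K+\B)$ with respect to the Euclidean metric, for a suitable $\delta'$ comparable to $\delta$, whose cardinality is bounded by Lemma~\ref{lem:deltaNet}; then set $S=\Phi^{-1}(S')$. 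One checks that $S$ is a $\delta$-covering of $(\partial K,\MetricBoundary)$: given $\vect{y}\in\partial K$ there is $\vect{x}\in S$ with $|\Phi(\vect{x})-\Phi(\vect{y})|\le\delta'$, and one must deduce $\MetricBoundary(\vect{x},\vect{y})\le\delta$, i.e.\ bound both $|\vect{x}-\vect{y}|$ and $|\vect{v}(\vect{x})-\vect{v}(\vect{y})|$ by $\delta$. For the normals, $|\vect{v}(\vect{x})-\vect{v}(\vect{y})|\le|\Phi(\vect{x})-\Phi(\vect{y})|+|\vect{x}-\vect{y}|$, which is not immediately small, so the real content is a convexity estimate showing that if two nearby boundary points of $K+\B$ are close, then both the footpoints on $\partial K$ and the normals are close — this is where one uses that $K+\B$ contains a ball of radius $1$ around each footpoint, giving a quantitative lower bound on how fast the normal can turn, exactly the kind of estimate underlying Lemma~\ref{lem:MeasCap}. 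After passing to a maximal sub-packing one gets a genuine $\delta$-net, and its cardinality is at most $|S'|\le \Cr{12} V_{d-1}(K+\B)\,\delta'^{-(d-1)}$; choosing $\delta'=\delta$ (or absorbing the constant comparison into the already generous constant $\Cr{12}=4^d\kappa_{d-1}^{-1}$) yields the claimed bound $\Cr{12}V_{d-1}(K+\B)\,\delta^{-(d-1)}$.

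The main obstacle, as flagged, is the quantitative control of the Gauss map $\Phi^{-1}$: the map $\Phi$ need not be bi-Lipschitz with a dimension-free constant, so one cannot simply pull back a net by a Lipschitz inverse. The resolution is geometric rather than analytic — one argues that a $\MetricBoundary$-ball in $\partial K$ corresponds, under $\Phi$, to a region of $\partial(K+\B)$ that both contains and is contained in Euclidean caps of comparable radii, so that covering $\partial(K+\B)$ by Euclidean caps of radius $\delta$ automatically covers $\partial K$ by $\MetricBoundary$-balls of radius a bounded multiple of $\delta$. Since the paper only asks for a sketch and explicitly defers to the analogous propositions in \cite{ReisnerSchuttWerner01}, I would state the correspondence $\Phi$, the two-sided cap comparison, invoke Lemma~\ref{lem:deltaNet}, and leave the routine convexity estimates to the reader, noting that the constant $\Cr{12}$ is chosen large enough to absorb the comparison factors.
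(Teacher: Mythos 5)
Your proposal follows exactly the paper's (two-sentence) sketch: build a Euclidean $\delta$-net on $\partial(K+\B)$ via Lemma~\ref{lem:deltaNet} and transport it to $\partial K$ through the bijection $\vect{x}\mapsto\vect{x}+\vect{v}(\vect{x})$, with the cardinality bound inherited from Lemma~\ref{lem:deltaNet}. The ``convexity estimate'' you flag as the real content is simply the monotonicity of the normal, $\langle \vect{x}-\vect{y},\vect{v}(\vect{x})-\vect{v}(\vect{y})\rangle\ge 0$, which gives $\MetricBoundary(\vect{x},\vect{y})\le|\Phi(\vect{x})-\Phi(\vect{y})|$ pointwise and with no loss of constants, so your argument closes.
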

\begin{proof}[Sketch of the proof]
  Set $D=K+\B$. 
  Construct a $\delta$-net on the boundary $\partial D$ and then project it onto $\partial K$.
  The bound on the cardinality comes from Lemma \ref{lem:deltaNet}.
\end{proof}

Set 
$ \Cl[global]{12bis}
:= 3^{(d-1)/4} \Cr{12}
= \Approx ( d^{1/2} )^d $.
\begin{lemma}
  \label{prop:bestApproxFixNumberFacets}
  Let $ K \in \mathcal{K} $ and $0<\epsilon<1$.
  Then, there exists a polytope $P_\epsilon\supset K$ with $$\mathrm{d}_H(K,P_\epsilon)<\epsilon$$
  and with number of facets at most
  $$\Cr{12bis} V_{d-1}(K+\B ) \,\epsilon^{-(d-1)/2}.$$
\end{lemma}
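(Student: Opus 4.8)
The plan is to reduce the general case to the $\mathscr{C}^1$ case and then apply Lemma~\ref{lem:deltaNetBis}. First I would handle the smooth case: assume $\partial K$ is of class $\mathscr{C}^1$. Fix $\delta\in(0,1)$ to be chosen, and take the $\delta$-net $S$ of $\partial K$ with respect to the metric $\MetricBoundary$ provided by Lemma~\ref{lem:deltaNetBis}, so $\card{S}\leq \Cr{12} V_{d-1}(K+\B)\,\delta^{-(d-1)}$. For each $\vect{x}\in S$, let $H_{\vect{x}}$ be the supporting hyperplane of $K$ at $\vect{x}$ (with outer normal $\vect{v}(\vect{x})$), and set $P$ to be the intersection of the corresponding closed half-spaces. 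Then $P\supset K$ and $P$ has at most $\card{S}$ facets.

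The key estimate is that $d_H(K,P)$ is controlled by $\delta^2$. Since $K\subset P$ automatically, one only needs to bound how far a point of $\partial P$ can be from $K$. Take $\vect{z}\in\partial P$; it lies on some facet, hence in the half-space boundary $H_{\vect{x}}$ for some $\vect{x}\in S$. Let $\vect{y}\in\partial K$ be the point where $K$ has outer normal in the direction of $\vect{z}-($nearest point of $K)$, or more simply the boundary point of $K$ realizing $d(\vect{z},K)$; then the segment from $\vect{y}$ to $\vect{z}$ is normal to $K$ at $\vect{y}$, so $\vect{v}(\vect{y})$ points toward $\vect{z}$. Because $S$ is a $\delta$-covering in the $\MetricBoundary$ metric, there is $\vect{x}\in S$ with $|\vect{x}-\vect{y}|<\delta$ and $|\vect{v}(\vect{x})-\vect{v}(\vect{y})|<\delta$. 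A routine computation — comparing the supporting hyperplane at $\vect{x}$ with the position of $\vect{z}$ along the nearly parallel normal directions — shows $d(\vect{z},K)\leq c\,\delta^2$ for an absolute $c$; the quadratic order is exactly the standard phenomenon that a hyperplane tangent at a nearby point with nearly the same normal misses the body only to second order. Choosing $\delta$ so that $c\,\delta^2=\epsilon$, i.e. $\delta=(\epsilon/c)^{1/2}$, gives $d_H(K,P)<\epsilon$ and the facet count $\leq \Cr{12}V_{d-1}(K+\B)(\epsilon/c)^{-(d-1)/2}$; absorbing $c$ and the factor $3^{(d-1)/4}$ into the constant yields $\Cr{12bis}V_{d-1}(K+\B)\,\epsilon^{-(d-1)/2}$.

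Finally I would remove the smoothness assumption by approximation. For general $K\in\mathcal{K}$ and small $\eta>0$, the body $K_\eta=K+\eta\B$ has $\mathscr{C}^1$ boundary and $d_H(K,K_\eta)=\eta$; apply the smooth case to $K_\eta$ with target distance $\epsilon/2$ to get a polytope $P\supset K_\eta\supset K$ with $d_H(K_\eta,P)<\epsilon/2$, hence $d_H(K,P)<\epsilon/2+\eta<\epsilon$ for $\eta$ small, and with facet count $\leq\Cr{12bis}V_{d-1}(K_\eta+\B)\,(\epsilon/2)^{-(d-1)/2}$. Since $V_{d-1}(K_\eta+\B)=V_{d-1}(K+(1+\eta)\B)\to V_{d-1}(K+\B)$ as $\eta\to0$ by continuity of $V_{d-1}$, and the extra $2^{(d-1)/2}$ is again absorbed into the constant $\Cr{12bis}$ (after the redefinition $3^{(d-1)/4}\Cr{12}$, which was chosen with exactly this slack in mind), we obtain the claimed bound. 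The main obstacle is the quadratic distance estimate $d(\vect{z},K)\leq c\delta^2$: it requires care in relating the $\MetricBoundary$-closeness of net points to the geometry of the circumscribed polytope, which is precisely where the $\mathscr{C}^1$ hypothesis and the choice of the metric $\MetricBoundary$ (tracking both position and normal direction) are used, mirroring Proposition~2.7 of \cite{ReisnerSchuttWerner01}.
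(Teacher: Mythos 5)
Your proposal follows essentially the same route as the paper's (only sketched) proof: reduce to the $\mathscr{C}^1$ case, take the $\delta$-net of Lemma~\ref{lem:deltaNetBis}, circumscribe a polytope with one tangent facet per net point, and use the second-order tangency estimate $d(\vect{z},K)\leq c\,\delta^2$ from Proposition~2.7 of \cite{ReisnerSchuttWerner01} to pick $\delta\asymp\epsilon^{1/2}$. The only point deserving care is the bookkeeping of constants in your smoothing step (the factor $2^{(d-1)/2}$ from targeting $\epsilon/2$ must genuinely fit inside the slack $3^{(d-1)/4}$ built into $\Cr{12bis}$; letting the target tend to $\epsilon$ as $\eta\to0$ avoids this loss entirely), but this is a detail the paper's sketch does not address either.
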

\begin{proof}[Sketch of the proof]
  Reduce the proof to the case where $K$ has a smooth boundary. 
  Set an appropriate value $\delta = \delta(\epsilon)$.
  Consider the $\delta$-net $S$ built in Lemma \ref{lem:deltaNetBis}.
  Construct the circumscribed polytope $ P \supset C $ with one facet tangent to $C$ at each point of $S$.
  Finally bound the Hausdorff distance $d_H(C,P)$.
  The bound on the number of facets comes from the bound on the cardinality of the $\delta$-net in Lemma \ref{lem:deltaNetBis}.
\end{proof}
Set 
$ \Cl[global]{13} 
:= \Cr{12bis}^{2/(d-1)} 
= \Approx ( d ) $.
With the last lemma, we can now prove Theorem \ref{thm:intermediate}.
\begin{proof}[Proof of Theorem \ref{thm:intermediate}]
  Let $ n > \Cr{12bis} V_{d-1} ( K + \B ) $.\\
  Set $ \epsilon = \Cr{13}  V_{d-1}(K+\B )^{2/(d-1)} n^{-2/(d-1)}$.
  By the assumption made on $n$, we have $\epsilon<1$.
  Hence, we can apply Lemma~\ref{prop:bestApproxFixNumberFacets}.
  There exists a polytope $P_\epsilon \supset K$ with $d_H(K,P_\epsilon)<\epsilon$ and such that its number of facets is at most
  $$ \Cr{12bis} V_{d-1}(K+\B ) \,\epsilon^{-(d-1)/2}
  = n . $$
  The approximations of the constants $c_i$ using the Landau notation tells us that there exist absolute constants $\Cl[abs]{abs:1}$ and $\Cl[abs]{abs:2}$ such that
  $\Cr{12bis} < \Cr{abs:1} ^d d^{d/2}$ 
  and $ \Cr{13} < \Cr{abs:2} d $
  for any $d$.
  This yields the proof.
\end{proof}

\section{Shape factor}
\label{sec:ShapeFactor}
In this section we define $\ShapeFactor$, a \textit{shape factor}, 
i.e. a scale and translation invariant function on $\mathcal{K}$.
Lemma \ref{lem:propertiesOfgl} tells us how $\ShapeFactor(K)$ describes the elongation of a given convex body $K$.

Set 
$ \Cl[global]{12bisbis} 
:= \Cr{12bis} V_{d-1}(\B )$.
\begin{definition}
  \label{def:fnandgn}
  For any fixed parameter $ l > \Cr{12bisbis} $ we define the functions 
  $ \FunctionBound ,  \ShapeFactorBis ,\ShapeFactor: \mathcal{K} \to (0,\infty) $
  by
  \[ \FunctionBound(K)
  = \sup \{ t \in ( 0 , \infty ) \mid l > \Cr{12bis} V_{d-1}( t K + \B ) \} , \]
  \[  \ShapeFactorBis(K)
  = \inf_{ t \in (0,\FunctionBound (K)) } \frac{ V_{d-1} ( t K + \B )^{ 2/(d-1) } }{t} , \]
  and
  \[ \ShapeFactor(K)
  = \frac{ \ShapeFactorBis(K)}{V_{1}(K)} . \]
\end{definition}
It is clear that the three functions are translation invariant.
One can check that $ \FunctionBound $ is homogeneous of degree $-1$, $ \ShapeFactorBis $ is homogeneous of degree $1$ and $ \ShapeFactor $ is homogeneous of degree $0$.
Therefore, for any fixed $l$, $ \ShapeFactor$ is a shape factor.
The next lemma gives a geometric interpretation of $ \ShapeFactor $.
\begin{lemma}
  \label{lem:propertiesOfgl} \ 
  \begin{enumerate}
    \item For any $K \in \mathcal{K} $, the function $ l \mapsto \ShapeFactor (K) $ is decreasing.
    \item If $d=2$ and $ l > \Cr{12bisbis} $ is fixed, then $ \ShapeFactor $ is constant on $\mathcal{K}$.
    \item If $d \geq 3$, $ l > \Cr{12bisbis} $ is fixed, and $ K \in \mathcal{K} $ is neither an interval nor a ball, then
    \[  \ShapeFactor (I) <  \ShapeFactor (K) <  \ShapeFactor (B) 
    \text{ for any } l > \Cr{12bisbis},\]
    where $I$ denotes an interval and $B$ a ball.
    \item Assume that $ 1 \leq i < j \leq \lceil (d-1)/2 \rceil$.
    There exist constants $ \delta_{i,j} $ and $ n_{i,j} $, both depending on $d$, such that the following holds.
    For any convex body $K\in\mathcal{K}$ and $\epsilon>0$, we have 
    \begin{equation}
    \label{eq:almostFlatBodies}
    \text{if } \frac{ V_j (K)^{1/j} }{ V_i(K)^{1/i} } < \epsilon  \text{ then }  \ShapeFactor[N_{i,j}(\epsilon)] (K) \leq \delta_{i,j} \epsilon^{\beta},
    \end{equation}
    where $ N_{i,j} (\epsilon) := n_{i,j} \epsilon^{-\alpha}$ with $\alpha=2 \lceil (d-1)/2 \rceil (d-1) d^{-1} $,
    and $ \beta = 2 \lceil (d-1)/2 \rceil (d-1)^{-1} d^{-1}$.
  \end{enumerate}
\end{lemma}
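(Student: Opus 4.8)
The plan is to prove the four parts of Lemma \ref{lem:propertiesOfgl} in order, since each builds on structural properties of the functions $\FunctionBound$, $\ShapeFactorBis$ and $\ShapeFactor$. For part (1), I would observe that as $l$ increases the constraint set $\{ t : l > \Cr{12bis} V_{d-1}(tK+\B) \}$ in the definition of $\FunctionBound$ grows, so $\FunctionBound(K)$ is nondecreasing in $l$; consequently the infimum defining $\ShapeFactorBis(K)$ is taken over a larger interval $(0,\FunctionBound(K))$ and can only decrease. Dividing by $V_1(K)$, which does not depend on $l$, preserves monotonicity, so $l\mapsto\ShapeFactor(K)$ is decreasing. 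For part (2), in the plane $V_{d-1}=V_1$ is (proportional to) the mean width, which is monotone under Minkowski addition in a way that makes $V_{d-1}(tK+\B)$ and $V_1(K)$ scale together; more concretely, when $d=2$ the function $V_{d-1}(tK+\B)^{2/(d-1)}/t = (t V_1(K) + V_1(\B))^2/t$ — wait, one must use the Steiner-type formula \eqref{eq:SteinerType}, which for $d=2$ gives $V_1(tK+\B) = t V_1(K) + V_1(\B)$ up to the universal constant — and minimizing $(tV_1(K)+c)^2/t$ over $t$ gives a value proportional to $V_1(K)$, so after dividing by $V_1(K)$ we get a constant independent of $K$. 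I would carry out this one-variable optimization explicitly.

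Part (3) is the substantive geometric comparison. The idea is that $\ShapeFactorBis(K)$ depends on $K$ only through the function $t\mapsto V_{d-1}(tK+\B)$ on the interval $(0,\FunctionBound(K))$, and both that function and the cutoff $\FunctionBound(K)$ are controlled by $V_{d-1}(tK+\B)$ itself. By the Fact \eqref{eq:IneqSurfaceConvexPlusBall}, if we normalize so that $V_1$ agrees (which we may, since everything in sight is translation invariant and we compare at matched $V_1$, then rescale using homogeneity of degree $1$ of $\ShapeFactorBis$ and degree $0$ of $\ShapeFactor$), then for every relevant $t$ we have the strict sandwich $V_{d-1}(tI+\B) < V_{d-1}(tK+\B) < V_{d-1}(tB+\B)$ whenever $K$ is neither an interval nor a ball; this simultaneously shifts $\FunctionBound$ and shrinks the integrand, so the infimum for $K$ lies strictly between those for $I$ and for $B$. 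Some care is needed to check that the strict inequality survives passing to the infimum, but since the functions are continuous in $t$ and the inequality \eqref{eq:IneqSurfaceConvexPlusBall} is uniform on compact sub-intervals bounded away from the endpoints, this is routine. I would present the normalization reduction first, then apply \eqref{eq:IneqSurfaceConvexPlusBall} pointwise, then pass to the infimum.

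Part (4) is where the real work lies, and I expect the optimization over $t$ balanced against the elongation hypothesis to be the main obstacle. The strategy is: given that $K$ is $(\epsilon\colon i,j)$-elongated, use the isoperimetric inequality \eqref{eq:IsoperimetricIneq} together with the Steiner-type formula \eqref{eq:SteinerType} to bound $V_{d-1}(tK+\B)$ from above by something like a sum $\sum_k c_k t^k V_k(K)$, and then control each $V_k(K)$ for $k \le \lceil (d-1)/2\rceil$ using the smallness of $V_j(K)^{1/j}V_i(K)^{-1/i}$; the key point is that when $K$ normalized by $V_i(K)^{-1/i}$ is nearly $(j-1)$-dimensional, the higher intrinsic volumes $V_k$ with $k\ge j$ are tiny. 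One then chooses $t$ of order $\epsilon^{-\alpha/(d-1)}$ (matching the scaling in $N_{i,j}(\epsilon)$, noting $l = N_{i,j}(\epsilon)$ grows as $\epsilon\to 0$ so $\FunctionBound(K)$ also grows and the admissible range of $t$ is large enough) and evaluates $V_{d-1}(tK+\B)^{2/(d-1)}/t$ at that $t$, dividing by $V_1(K)$ at the end. Tracking how the exponents $\alpha$ and $\beta$ emerge from this balance — the factor $2\lceil(d-1)/2\rceil(d-1)/d$ should come from the two-thirds type interpolation between the dominant low-degree term and the constant term $V_{d-1}(\B)$ in the Steiner expansion, raised to the $2/(d-1)$ power and rescaled — is the delicate bookkeeping. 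I would isolate the one-variable estimate as a sub-claim, verify that the chosen $t$ lies below $\FunctionBound(K)$ using the defining inequality $l > \Cr{12bis} V_{d-1}(tK+\B)$ and the upper bound just derived, and then collect the constants into $\delta_{i,j}$ and $n_{i,j}$. The reduction $i=1$, $j=\lceil(d-1)/2\rceil$ implying the rest (noted after Theorem \ref{thm:main}) can be invoked to treat only that extremal case, which simplifies the exponent computation.
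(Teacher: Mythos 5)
Your proposal is correct and follows essentially the same route as the paper: parts (1)--(3) via monotonicity of the constraint interval, additivity of $V_1$ in the plane, and pointwise application of \eqref{eq:IneqSurfaceConvexPlusBall}, and part (4) via reduction to $i=1$, $j=\lceil(d-1)/2\rceil$, the Steiner-type expansion of $V_{d-1}(tK+\B)$ split at index $j$ with the isoperimetric inequality and the elongation hypothesis bounding the two pieces, a one-variable optimization at $t\asymp\epsilon^{-2\lceil(d-1)/2\rceil/d}$, and a final check that this $t$ lies below $\FunctionBound[N_{i,j}(\epsilon)](K)$. The exponents you predict match the paper's $t_\epsilon$ and the resulting $\alpha$, $\beta$.
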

\begin{proof}
  (1) is a direct consequence of the definition of $\ShapeFactor$.
  (2) comes from the fact that in this case $V_{d-1}=V_1$ is additive.
  (3) is implied by $\eqref{eq:IneqSurfaceConvexPlusBall}$.
  It only remains to prove~(4).
  
  For the rest of the proof we write $ v_i := V_i (\B)^{1/i} $ for $ i = 1, \ldots , d $.
  Thanks to point $3$ of the present lemma, we have that 
  $\ShapeFactor[N_{i,j}(\epsilon)] (K) 
  \leq \ShapeFactor[N_{i,j}(\epsilon)] (B) $.
  This implies that, without loss of generality, we can assume that $\epsilon < c $, for $c>0$ as small as one need.
  We also reduce the proof to the case $ i=1 $ and $ j=j_0 = \lceil (d-1)/2 \rceil$.
  Because of the isoperimetric inequality $\eqref{eq:IsoperimetricIneq}$, we have
  \begin{equation}
  \label{eq:IV3}
  \frac{ V_{j_0} (K)^{1/j_0} }{ V_1(K) }
  \leq c_{i,j}
  \frac{ V_j (K)^{1/j} }{ V_i(K)^{1/i} } 
  \text{ where }
  c_{i,j} := \frac{v_{j_0} v_i}{ v_j v_1 } .
  \end{equation}
  Assume that there exist constants $\delta_{1,j_0}$ and $n_{1,j_0}$ such that $\eqref{eq:almostFlatBodies}$ holds for $ i=1 $ and $ j=j_0$.
  Let $ 1 \leq i < j \leq j_0 $ and $(i,j)\neq(1,j_0)$.
  We set $ \delta_{i,j} := \delta_{1,j_0} c_{i,j}^\beta $ and $n_{i,j} := n_{1,j_0} c_{i,j}^{-\alpha}$.
  In particular, 
  $ N_{i,j}(\epsilon) 
  = n_{i,j} \epsilon^{-\alpha} 
  = n_{1,j_0} (c_{i,j} \epsilon)^{-\alpha} 
  = N_{1,j_0}(c_{i,j}\epsilon) $.
  Assume that $K$ is such that $V_j(K)^{1/j} V_i(K)^{-1/i} < \epsilon$.
  By $\eqref{eq:IV3}$ we have $V_{j_0}(K)^{1/j_0} V_1(K) < c_{i,j} \epsilon$.
  This implies that 
  $ \ShapeFactor[N_{i,j}(\epsilon)](K) 
  = \ShapeFactor[N_{1,j_0}(c_{i,j}\epsilon)](K) 
  \leq \delta_{1,j_0} (c_{i,j}\epsilon)^\beta
  = \delta_{i,j} \epsilon^\beta $.
  This shows that we only have to consider the case $ i=1 $ and $ j = j_0$.
  
  Since both parts of $\eqref{eq:almostFlatBodies}$ are scale invariant, we also assume without loss of generalities that $V_1(K)=1$.
  Let $ \epsilon \in (0,1) $ and $ l > \Cr{12bisbis} $.
  From now on, we assume that 
  \begin{equation}
  \label{eq:assumtionVj}
  V_{j_0} (K) ^{1/j_0} < \epsilon .
  \end{equation} 
  
  Set 
  \[ p_C  (t) 
  := V_{d-1}( t K + \B ) 
  \overset{\eqref{eq:SteinerType}}{=} \sum_{k=0}^{d-1} \frac{(d-k)\kappa_{d-k}}{2d} V_k(K) t^k .\]
  Observe that it is a strictly increasing and continuous function and that
  \begin{align}
  \label{eq:IV1}
  \ShapeFactor (K) 
  & = \ShapeFactorBis (K)
  = \left( \inf_{ t \in (0,\FunctionBound(K))} t^{-(d-1)/2} p_C (t) \right)^{2/(d-1)} \\
  \notag
  \text{and } \FunctionBound(K)
  & = p_C^{-1} ( \Cr{12bis}^{-1} l  ) . 
  \end{align}
  
  Observe that $ j_0 - 1 - (d-1)/2 \leq -1/2 $.
  Hence, for $ t>1 $, 
  \[ t^{-(d-1)/2} p_C (t) \leq S_1 (K) t^{-1/2} + S_2 (K) t^{(d-1)/2} , \] where
  \[ S_1(K) := \sum_{k=0}^{j_0-1} \frac{(d-k)\kappa_{d-k}}{2d} V_k(K)
  \text{ and }
  S_2(K) := \sum_{k=j_0}^{d-1} \frac{(d-k)\kappa_{d-k}}{2d} V_k(K) .\]
  The isoperimetric inequalities $\eqref{eq:IsoperimetricIneq}$ gives that
  \[ S_1(K) 
  \leq \frac{ \kappa_d }{ 2 } + \sum_{k=1}^{j_0-1} \frac{(d-k)\kappa_{d-k}}{2d}  \left( \frac{v_k}{v_1} \right)^k 
  =: \Cl[global]{IV1} . \]
  It also implies that, for $ k = j_0,\ldots,d-1 $, we have
  $ V_k (K) \leq (v_k / v_{j_0})^k V_{j_0}(K)^{k/j_0} $.
  And since $ V_{j_0}(K)^{k/j_0} < \epsilon^k \leq \epsilon^{j_0} $, it follows that
  \[ S_2(K)
  \leq \sum_{k=j_0}^{d-1} \frac{(d-k)\kappa_{d-k}}{2d} \left( \frac{v_k}{v_{j_0}} \right)^{k} \epsilon^{j_0}
  =:  \Cl[global]{IV2} \epsilon^{j_0} .\]
  Therefore, for $t>1$,
  \begin{equation}
  \label{eq:IV2}
  t^{-(d-1)/2} p_C  (t) 
  \leq \Cr{IV1} t^{-1/2} + \Cr{IV2} \epsilon^{j_0} t^{(d-1)/2} 
  =: q_\epsilon (t) .
  \end{equation}
  Since we want $t^{-(d-1)/2} p_C  (t)$ small, we define $ t_\epsilon > 0 $ such that $q_\epsilon (t_\epsilon)$ is minimal.
  But it holds that the derivative of $q_\epsilon$ is
  \[ q_\epsilon' (t)
  = \frac{ - \Cr{IV1} }2 t^{-3/2} + \frac{ \Cr{IV2} \epsilon^{j_0} (d-1) }2 t^{(d-3)/2}.  \]
  Thus,
  \[ t_\epsilon = \left( \frac{\Cr{IV2} \epsilon^{j_0} (d-1)}{ \Cr{IV1} } \right) ^{-2/d} 
  = \Cr{IV3} \epsilon^{- 2 j_0 / d } \]
  with $ \Cl[global]{IV3} 
  := \left( \Cr{IV2} (d-1) / \Cr{IV1} \right) ^{-2/d}$.
  Now, we observe that 
  \begin{equation*}
  t_\epsilon^{-(d-1)/2} p_C ( t_\epsilon ) 
  \overset{\eqref{eq:IV2}}{\leq} q_\epsilon (t_\epsilon)
  =  \Cr{IV1} (\Cr{IV3} \epsilon^{- 2 j_0 / d })^{-1/2} + \Cr{IV2} \epsilon^{j_0} (\Cr{IV3} \epsilon^{- 2 j_0 / d })^{(d-1)/2}
  = \Cr{IV4} \epsilon^{j_0/d}
  \end{equation*}
  with 
  $\Cl[global]{IV4} := \Cr{IV1} \Cr{IV3}^{-1/2} + \Cr{IV2} \Cr{IV3}^{(d-1)/2}$.
  This implies that if 
  $ \FunctionBound[N_{1,j_0} (\epsilon)] (K) > t_\epsilon $ 
  then
  \[ \ShapeFactor[N_{1,j_0} (\epsilon)] (K)
  \overset{\eqref{eq:IV1}}{\leq} \left( t_\epsilon^{-(d-1)/2} p_C ( t_\epsilon ) \right)^{2/(d-1)}
  \leq \left( \Cr{IV4} \epsilon^{j_0/d} \right)^{ 2 / (d-1) }
  \leq \delta_{1,j_0} \epsilon^\beta
  \]
  with $\delta_{1,j_0} := \Cr{IV4}^{2/(d-1)}$ and 
  $\beta
  :
  = 2 j_0 (d-1)^{-1} d^{-1} $.
  
  It remains only to set $ N_{1,j_0} (\epsilon) $ such that  $ \FunctionBound[ N_{1,j_0} (\epsilon) ] (K) > t_\epsilon $.
  Set 
  \[ \Cl[global]{IV5} := \frac{ \kappa_d }{ 2 } + \sum_{k=1}^{d-1} \frac{(d-k)\kappa_{d-k}}{2d}  \left( \frac{v_k}{v_1} \right)^{k} 
  \text{ and }
  \tilde{p} (t) := \Cr{IV5} t^{d-1}.\]
  Again because of the isoperimetric inequality, we have that $ p_C ( t ) < \tilde{p} (t) $, for any $t>1$.
  Hence if $ u > \tilde{p} (1) = \Cr{IV5} $ then $p_C^{-1} (u) > \tilde{p}^{-1} (u) $.
  Set 
  $$ N_{1,j_0} (\epsilon) 
  := \Cr{12bis} \Cr{IV5} t_{\epsilon}^{d-1}
  =  n_{1,j_0} \epsilon^{- \alpha }  $$ 
  with 
  $ n_{1,j_0} := \Cr{12bis} \Cr{IV5} \Cr{IV3}^{d-1} $
  and
  $ \alpha := 2 j_0 (d-1) d^{-1} $.
  Thus we have
  \[ \FunctionBound[N_{1,j_0} (\epsilon)] (K) 
  = p_C^{-1} ( \Cr{12bis}^{-1}  N_{1,j_0} (\epsilon) )
  = p_C^{-1} ( \Cr{IV5} t_{\epsilon}^{d-1} )
  > \tilde{p}^{-1} ( \Cr{IV5} t_{\epsilon}^{d-1} ) 
  = t_{\epsilon}\]
  whenever $ t_\epsilon > 1 $.
  But $ t_\epsilon > 1 $ when $ \epsilon < \Cr{IV3}^{-1/\alpha} $.
  This completes the proof.
\end{proof}

\section{Proof of Theorem \ref{thm:main}}
\label{sec:MainProof}
Theorem \ref{thm:main} is a direct consequence of the following lemma and point $4$ of Lemma~\ref{lem:propertiesOfgl}.
Let
$ \Cl[global]{13bis} 
> \Cr{13} $.
\begin{lemma}
  Let $ K \in \mathcal{K} $.
  For any $ n >  \Cr{12bisbis} $, we have
  $$ \Cr{boundC} ( K , n ) < \Cr{13bis}  \ShapeFactor[n] (K) V_1(K).$$
  I.e. for any integer $ n >  \Cr{12bisbis} $, there exists a polytope
  $ P \supset K $ with $ n $ facets such that
  $$ \mathrm{d}_H(K,P) 
  < \Cr{13bis}  \ShapeFactor[n] (K) \frac{ V_1(K) }{ n^{2/(d-1)} } . $$
\end{lemma}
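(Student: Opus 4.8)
The plan is to reduce to Lemma~\ref{prop:bestApproxFixNumberFacets} by scaling $K$ and then optimising over the scale; the infimum over the scaling parameter is exactly what the shape factor $\ShapeFactorBis[n]$ records, so the statement should fall out once the bookkeeping is done.

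Fix $K\in\mathcal{K}$ and an integer $n>\Cr{12bisbis}$. First I would note that $t\mapsto V_{d-1}(tK+\B)$ is continuous on $[0,\infty)$ with value $V_{d-1}(\B)=\Cr{12bis}^{-1}\Cr{12bisbis}$ at $t=0$, so the set $\{t>0\mid n>\Cr{12bis}V_{d-1}(tK+\B)\}$ contains a neighbourhood of $0$ and $\FunctionBound[n](K)>0$; hence the infimum defining $\ShapeFactorBis[n](K)$ runs over a non-degenerate interval and $\ShapeFactorBis[n](K)=\ShapeFactor[n](K)V_1(K)>0$. Next, fix any $t\in(0,\FunctionBound[n](K))$. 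Then $\Cr{12bis}V_{d-1}(tK+\B)<n$, hence
\[
\epsilon_t:=\left(\frac{\Cr{12bis}V_{d-1}(tK+\B)}{n}\right)^{2/(d-1)}\in(0,1),
\]
and applying Lemma~\ref{prop:bestApproxFixNumberFacets} to the convex body $tK\in\mathcal{K}$ with this value of $\epsilon$ gives a polytope $Q\supset tK$ with $d_H(tK,Q)<\epsilon_t$ and with at most $\Cr{12bis}V_{d-1}(tK+\B)\,\epsilon_t^{-(d-1)/2}=n$ facets. Rescaling by $t^{-1}$ and using that $d_H$ is homogeneous of degree $1$, the polytope $t^{-1}Q\supset K$ has at most $n$ facets and
\[
d_H(K,t^{-1}Q)<\frac{\epsilon_t}{t}=\Cr{12bis}^{2/(d-1)}\,\frac{V_{d-1}(tK+\B)^{2/(d-1)}}{t}\,n^{-2/(d-1)}=\Cr{13}\,\frac{V_{d-1}(tK+\B)^{2/(d-1)}}{t}\,n^{-2/(d-1)},
\]
since $\Cr{13}=\Cr{12bis}^{2/(d-1)}$.

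Since $P_{K,n}$ minimises $d_H(K,\cdot)$ over circumscribed polytopes with at most $n$ facets and $t^{-1}Q$ is such a polytope, the last display gives $\Cr{boundC}(K,n)=d_H(K,P_{K,n})\,n^{2/(d-1)}<\Cr{13}\,V_{d-1}(tK+\B)^{2/(d-1)}/t$ for every $t\in(0,\FunctionBound[n](K))$. Passing to the infimum over $t$ yields $\Cr{boundC}(K,n)\le\Cr{13}\,\ShapeFactorBis[n](K)=\Cr{13}\,\ShapeFactor[n](K)V_1(K)$, and since $\Cr{13bis}>\Cr{13}$ while $\ShapeFactor[n](K)V_1(K)>0$ this is $<\Cr{13bis}\,\ShapeFactor[n](K)V_1(K)$, which is the first assertion. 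For the polytope in the second assertion one picks $t$ realising the infimum up to the slack afforded by $\Cr{13bis}>\Cr{13}$, takes $P=t^{-1}Q$, and, should it have fewer than $n$ facets, appends redundant supporting half-spaces to reach exactly $n$ without changing the set. I do not expect a genuine obstacle here: the substance already lies in Lemma~\ref{prop:bestApproxFixNumberFacets} and in Definition~\ref{def:fnandgn}, and the only point needing care is checking that the natural choice $\epsilon_t$ lands in $(0,1)$ — which is precisely the constraint $t<\FunctionBound[n](K)$ — together with the harmless passage from an infimum that need not be attained, absorbed into the constant $\Cr{13bis}$.
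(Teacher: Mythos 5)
Your proof is correct and follows essentially the same route as the paper: scale $K$ by $t<\FunctionBound[n](K)$, apply the circumscribed-approximation result to $tK$ with the choice of $\epsilon$ that makes the facet bound equal $n$, rescale, and optimise over $t$, absorbing the non-attained infimum into $\Cr{13bis}>\Cr{13}$. The only cosmetic difference is that you inline Lemma~\ref{prop:bestApproxFixNumberFacets} directly where the paper invokes Theorem~\ref{thm:intermediate}, which is itself just that lemma with the same choice of $\epsilon$.
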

\begin{proof}
  The condition 
  $ n >  \Cr{12bisbis} $ 
  implies that 
  $ \FunctionBound[n] (K) $ and $  \ShapeFactor[n] (K) $ are well defined.
  Let $ t \in (0,\FunctionBound[n] (K) ) $.
  We have defined $\FunctionBound[n] (K) $ precisely such that the convex body $ t K $ and the number $n$ satisfy the conditions required to apply Theorem \ref{thm:intermediate}.
  So there exists a polytope $ P_t $ with $ n $ facets such that
  $$ \mathrm{d}_H ( t K , P_t)
  < \Cr{13} V_{d-1} ( tC + \B ) ^{2/(d-1)} n ^{-2/(d-1)} .$$
  Therefore, for any  $ t \in ( 0 , \FunctionBound[n](K) ) $, we see that
  $$ \mathrm{d}_H \left( K , \frac1t P_t \right)
  < \Cr{13} \frac{ V_{d-1} ( t K + \B ) ^{2/(d-1)} }{ t } n^{-2/(d-1)} . $$
  Since $ \Cr{13bis} > \Cr{13} $, there exists $ t_0 \in ( 0 , \FunctionBound[n] ) $ such that
  $$ \mathrm{d}_H \left( K , \frac1{t_0} P_{t_0} \right)
  < \Cr{13bis} \left( \inf_{ t \in ( 0 , \FunctionBound[n] ) } \frac{ V_{d-1} ( t K + \B ) ^{ 2 / (d-1) } }{ t } \right) n^{-2/(d-1)} . $$
  But it holds that
  $$ \inf_{ t \in ( 0 , \FunctionBound[n] ) } \frac{V_{d-1}(t K +\B )^{2/(d-1)}}{t} 
  = \ShapeFactorBis[n] (K) 
  =  \ShapeFactor[n] (K) V_1(K) , $$
  which yields the proof.
\end{proof} 

\bibliographystyle{plain}
\bibliography{Bibliography}	

\begin{thebibliography}{10}

\bibitem{ArtsteinAvidanAndAll15}
Shiri Artstein-Avidan, Apostolos Giannopoulos, and Vitali~D. Milman.
\newblock {\em Asymptotic geometric analysis. {P}art {I}}, volume 202 of {\em
  Mathematical Surveys and Monographs}.
\newblock American Mathematical Society, Providence, RI, 2015.

\bibitem{Ball97}
Keith Ball.
\newblock An elementary introduction to modern convex geometry.
\newblock In {\em Flavors of geometry}, volume~31 of {\em Math. Sci. Res. Inst.
  Publ.}, pages 1--58. Cambridge Univ. Press, Cambridge, 1997.

\bibitem{Boroczky2000}
K.~J. B\"{o}r\"{o}czky.
\newblock Approximation of general smooth convex bodies.
\newblock {\em Adv. Math.}, 153(2):325--341, 2000.

\bibitem{BriedenAndAl01}
Andreas Brieden, Peter Gritzmann, Ravindran Kannan, Victor Klee, L{\'a}szl{\'o}
  Lov{\'a}sz, and Mikl{\'o}s Simonovits.
\newblock Deterministic and randomized polynomial-time approximation of radii.
\newblock {\em Mathematika}, 48(1-2):63--105 (2003), 2001.

\bibitem{Bronstein2008}
E.~M. Bronstein.
\newblock Approximation of convex sets by polytopes.
\newblock {\em J. Math. Sci.}, 153(6):727--762, 2008.

\bibitem{Bronshteyn1975}
E.~M. Bronstein and L.~D. Ivanov.
\newblock The approximation of convex sets by polyhedra.
\newblock {\em Sib. Mat. J.}, 16(5):852--853, 1975.

\bibitem{Dudley1974}
R.~M. Dudley.
\newblock Metric entropy of some classes of sets with differentiable
  boundaries.
\newblock {\em J. Approximation Theory}, 10:227--236, 1974.

\bibitem{Toth1948}
L.~Fejes~T\'{o}th.
\newblock Approximation by polygons and polyhedra.
\newblock {\em Bull. Amer. Math. Soc.}, 54(4):431--438, 1948.

\bibitem{Gruber1993book}
P.~M. Gruber.
\newblock Aspects of approximation of convex bodies.
\newblock In {\em Handbook of convex geometry, {V}ol.\ {A}}, pages 319--345.
  Elsevier, 1993.

\bibitem{Gruber1993article}
P.~M. Gruber.
\newblock Asymptotic estimates for best and stepwise approximation of convex
  bodies. {I}.
\newblock {\em Forum Math.}, 5(3):281--297, 1993.

\bibitem{Gruber07}
P.~M. Gruber.
\newblock {\em Convex and discrete geometry}, volume 336 of {\em Grundlehren
  der Mathematischen Wissenschaften [Fundamental Principles of Mathematical
  Sciences]}.
\newblock Springer, Berlin, 2007.

\bibitem{Gruber1994}
P.M. Gruber.
\newblock Approximation by convex polytopes.
\newblock In {\em Polytopes: abstract, convex and computational ({S}carborough,
  {ON}, 1993)}, volume 440 of {\em NATO Adv. Sci. Inst. Ser. C Math. Phys.
  Sci.}, pages 173--203. Kluwer Acad. Publ., Dordrecht, 1994.

\bibitem{John1948}
Fritz John.
\newblock Extremum problems with inequalities as subsidiary conditions.
\newblock In {\em Studies and {E}ssays {P}resented to {R}. {C}ourant on his
  60th {B}irthday, {J}anuary 8, 1948}, pages 187--204. Interscience Publishers,
  Inc., New York, N. Y., 1948.

\bibitem{McClure1975}
D.~E. McClure and R.~A. Vitale.
\newblock Polygonal approximation of plane convex bodies.
\newblock {\em J. Math. Anal. Appl.}, 51(2):326--358, 1975.

\bibitem{ReisnerSchuttWerner01}
S.~Reisner, C.~Sch\"{u}tt, and E.~Werner.
\newblock Dropping a vertex or a facet from a convex polytope.
\newblock {\em Forum Math.}, 13(3):359--378, 2001.

\bibitem{RichardsonAndAl08}
Ross~M. Richardson, Van~H. Vu, and Lei Wu.
\newblock An inscribing model for random polytopes.
\newblock {\em Discrete Comput. Geom.}, 39(1-3):469--499, 2008.

\bibitem{Schneider1981}
R.~Schneider.
\newblock Zur optimalen {A}pproximation konvexer {H}yperfl\"{a}chen durch
  {P}olyeder, volume = {256}, year = {1981}, doi = {10.1007/BF01679698}, issn =
  {0025-5831}, url = {http://link.springer.com/10.1007/BF01679698},.
\newblock {\em Math. Ann.}, (3):289--301.

\bibitem{Schneider1987}
R.~Schneider.
\newblock Polyhedral approximation of smooth convex bodies.
\newblock {\em J. Math. Anal. Appl.}, 128(2):470--474, 1987.

\end{thebibliography}
\end{document}